\theoremstyle{plain}
\newtheorem{lemma}{Lemma}
\newtheorem{proposition}{Proposition}
\theoremstyle{definition}
\newtheorem{assumption}{Assumption}
\newtheorem{definition}{Definition}
\theoremstyle{remark}
\newtheorem{remark}{Remark}
\title{Efficient globalization of heavy-ball type methods for unconstrained optimization based on curve searches}
\author{ 
	Federica Donnini\\
	Global Optimization Laboratory (GOL) \\
	Department of Information Engineering \\
	University of Florence \\
	Via di Santa Marta, 3, 50139, Florence, Italy \\
	\texttt{federica.donnini@unifi.it} \\
	\And
	Matteo Lapucci\\
	Global Optimization Laboratory (GOL) \\
	Department of Information Engineering \\
	University of Florence \\
	Via di Santa Marta, 3, 50139, Florence, Italy \\
	\texttt{matteo.lapucci@unifi.it} \\
	\And
	Pierluigi Mansueto \\
	Global Optimization Laboratory (GOL) \\
	Department of Information Engineering \\
	University of Florence \\
	Via di Santa Marta, 3, 50139, Florence, Italy \\
	\texttt{pierluigi.mansueto@unifi.it} \\
}
\begin{document}
\maketitle

\begin{abstract}
	In this work, we deal with unconstrained nonlinear optimization problems. Specifically, we are interested in methods carrying out updates possibly along directions not of descent, like Polyak's heavy-ball algorithm. Instead of enforcing convergence properties through line searches and  modifications of search direction when suitable safeguards are not satisfied, we propose a strategy based on searches along curve paths: a curve search starting from the first tentative update allows to smoothly revert towards a gradient-related direction if a sufficient decrease condition is not met. The resulting algorithm provably possesses global convergence guarantees, even with a nonmonotone decrease condition. While the presented framework is rather general, particularly of interest is the case of parabolic searches; in this case, under reasonable assumptions, the resulting algorithm can be shown to possess optimal worst case complexity bounds for reaching approximate stationarity in nonconvex settings. 
	Practically, we show that the proposed globalization strategy allows to consistently accept (optimal) pure heavy-ball steps in the strongly convex case, while standard globalization approaches would at times negate them before even evaluating the objective function.  
	Preliminary computational experiments also suggest that the proposed framework might be more convenient than classical safeguard based approaches. 
\end{abstract}

\keywords{Curve search \and Global convergence \and Complexity bounds \and Heavy-ball method}
\MSCs{90C26 \and 90C30 \and 65Y20}

\section{Introduction}
In this paper, we focus on the problem  of minimizing a continuously differentiable function over the Euclidean space, i.e., we consider optimization problems of the form
\begin{equation}
	\label{OP}
	\min_{x\in\mathbb{R}^n}f(x),
\end{equation}
where $f:\mathbb{R}^n\to \mathbb{R}$ is continuously differentiable.
The literature on algorithms to tackle this general class of problems is extremely vast. We refer the reader to monographies like \cite{bertsekas1999nonlinear,grippo2023introduction} for a thorough overview of the main methodological frameworks.

Here, we develop our discussion starting from the famous Polyak's heavy-ball method \cite{polyak1964some,polyak1987introduction}. The main idea of Polyak's algorithm consists in adding a momentum term, with a fixed weight, to the standard gradient descent update; iterations then take the form
$$x^{k+1}=x^k-\alpha \nabla f(x^k)+\beta (x^k-x^{k-1}),$$
where $\alpha, \beta \in \mathbb{R}$ are typically fixed positive values.
Partially repeating the previous step has the effect of controlling oscillation and providing acceleration in low curvature regions. All of this can, in principle, be achieved using only information already available: no additional function evaluations are required to be carried out. This feature makes the addition of momentum terms appealing in large-scale settings.

Theoretically, heavy-ball method is proven to have a sound behavior and to provide a substantial improvement w.r.t.\ gradient descent under regularity assumptions and for suitable choices of the parameters $\alpha$ and $\beta$. In particular, local linear convergence (with better constants than both gradient descent and Nesterov's method) can be guaranteed if the objective function is $\mu$-strongly convex, twice differentiable and $L$-smooth, as long as the two parameters belong to suitable intervals depending on the values of $L$ and $\mu$ \cite{ghadimi2015global,lessard2016analysis,polyak1987introduction}. Global convergence with fast linear rate can even be proved for strongly convex quadratic problems \cite{ghadimi2015global,polyak1964some,saab2022adaptive}. In both cases, ``optimal values'' for $\alpha$ and $\beta$ exist, again depending on $\mu$ and $L$, leading to the best constant in the convergence rate \cite{polyak1964some}. However, the convergence properties are already lost if some of the aforementioned assumptions are relaxed, see for instance \cite[Section 4.6]{lessard2016analysis}. 

In absence of convexity guarantees, fast linear convergence can still be proved under the Polyak-Lojasiewicz assumption \cite{kassing2024polyak}. In the general nonconvex case, however, no convergence results are available for the ``pure'' heavy-ball method. In this setting, one of the main issues lies in the choice of the stepsize parameters $\alpha$ and $\beta$, since constant values across all iterations can hardly work in such a scenario; dynamic choices shall then be considered. However, most existing works on this issue either introduce safeguards concerning the descent property of the heavy-ball direction \cite{fan2023msl} or are in fact closely related to conjugate gradient approaches \cite{lapucci2024globallyconvergentgradientmethod,Lee17,Liu2024,Powell1977,Tang24,zhang2023drsomdimensionreducedsecondorder}; in both cases, we will detailedly argue that the resulting methods cannot be properly seen as a heavy-ball algorithm.

The main contribution of this work concerns a novel globalization strategy for the heavy-ball method - and actually for any iterative method - in the nonconvex scenario. The proposed solution scheme revolves around a curve search strategy \cite{ben1990curved,botsaris1978differential,goldfarb1980curvilinear,gould2000exploiting,shi2005new,xu2016global}. Specifically, we prove soundness and global convergence results for any iterative method that carries out backtracking searches along suitable search curves with the aim of satisfying a (possibly nonmonotone) Armijo-type sufficient decrease condition. Global convergence of the resulting general framework is proved under a set of assumptions on the curves that differ from those considered in other related works and, to the best of our knowledge, the convergence result for a nomonotone curve search approach is novel in the literature. Furthermore, we show that, in the particular case of quadratic curves and under reasonable assumptions, optimal worst-case complexity bounds can be proved for the proposed framework in the nonconvex case. Complexity results for curve search based methods constitute, to the best of our knowledge, another original contribution to the literature.

We then turn back to the specific case of heavy-ball: not only we outline that the algorithm fits the assumption of the general curvilinear search framework, but we also show that, differently than other globalization strategies from the literature, our technique allows to recover the behavior of the pure heavy-ball method in the strongly convex case. Finally, by quite extensive numerical experiments on nonconvex problems, we highlight that the globalization strategy is effective in practice and more efficient than available alternatives. 

The remainder of the paper is organized as follows. In Section \ref{sec:gen_cs}, we describe and analyze our novel (nonmonotone) descent method based on curve searches from a theoretical perspective. Section \ref{sec::sqc} considers a special case of our framework involving quadratic curves, focusing primarily on its worst-case complexity properties. In Section \ref{sec::heavyball}, we examine the heavy-ball method, highlighting the differences between our curve search approach and other globalization strategies for Polyak's algorithm known in the literature. Section \ref{sec::comp-exp} presents numerical experiments on both strongly convex and nonconvex problems, comparing our method with standard strategies for problem \eqref{OP}. Finally, in Section \ref{sec::conclusions}, we provide concluding remarks.

\section{A two-direction descent method based on curvilinear searches}
\label{sec:gen_cs}
Of the numerous methods developed to tackle problems of the form \eqref{OP}, many of the most efficient ones fall within the class of line-search-based methods
\cite{grippo2023introduction}, i.e., approaches that rely on update rules of the form
\begin{equation*}
	x^{k+1}=x^k+\alpha_k d_k,
\end{equation*}
where $d_k\in\mathbb{R}^n$ is a search direction (typically a descent direction) and $\alpha_k\in~\mathbb{R}$ is a positive stepsize, usually selected via a backtracking procedure. The direction and the stepsize are generally chosen so that the resulting sequence $\{x^k\}$ has guarantees of convergence towards stationarity for \eqref{OP}.

By a closer look, line-search methods can be seen as a special case of curve-search methods \cite{ben1990curved,goldfarb1980curvilinear}. In general, a curve search method iteratively defines the new iterate according to
\begin{equation}\label{eq::CS}
	x^{k+1}=\gamma_k(t_k),
\end{equation}
where $\gamma_k:[0,1]\to\mathbb{R}^n$ is a differentiable curve and $t_k$ is a positive parameter that indicates how far to move along the curve.
Clearly, for a line-search type algorithm, $\gamma_k$ would define a straight line.

For the purpose of this work, we will focus on search curves whose initial velocity is suitably related to the gradient. Formally, we consider curves \text{$\gamma_k(t):[0,1]\to\mathbb{R}^n$} such that
\begin{equation*}
	\gamma_k(t) = \gamma(t;x^k,d_k,\xi_k),
\end{equation*}
by which we mean that each $\gamma_k$ belongs to the same family of curves $\gamma$ whose parametrization is dependent on the current solution  $x^k\in\mathbb{R}^n$, a suitable search direction $d_k\in\mathbb{R}^n$ and some other quantities $ \xi_k\in\mathbb{R}^p$ (which, for instance, might denote a second search direction). 
For these curves, we will make throughout the paper the following set of assumptions.
\begin{assumption}\label{ass::defcurve}
	The following conditions hold for the family of search curves $\gamma(\cdot;x,d,\xi)$: 
	\begin{enumerate}[(a)]
		\item $\gamma(t;x,d,\xi)$ is continuous w.r.t.\ $t,x,d$ and $\xi$;
		\item $\gamma(0;x,d,\xi)=x$, i.e., the starting point of the curve is the solution $x$;
		\item the velocity $$\gamma'(t;x,d,\xi) = \left(\frac{\partial \gamma_1(t;x,d,\xi)}{\partial t},\ldots,\frac{\partial \gamma_n(t;x,d,\xi)}{\partial t}\right)^\top$$ exists and is continuous for all $t,x,d$ and $\xi$;
		\item $\gamma'(0;x,d,\xi)=d$, i.e., the initial velocity of the curve is given by direction $d$.
	\end{enumerate}
\end{assumption}

We now give a fundamental definition.
\begin{definition}
	Let $f:\mathbb{R}^n\to\mathbb{R}$ be a continuously differentiable function. We say that $\gamma_k$ is a \emph{descent curve} for $f$ at $x^k\in\mathbb{R}^n$ if $\gamma'_k(0)$ is a descent direction for $f$ at $x^k$, i.e., $\nabla f(x^k)^\top \gamma_k'(0)<0$.    
\end{definition}

For the ease of notation, we also define the function $\varphi_k:[0,1]\to\mathbb{R}$ such that $\varphi_k(t) = f(\gamma_k(t))$. We shall note that $\varphi_k'(t)=\nabla f(\gamma_k(t))^\top\gamma_k'(t)$
and $\varphi_k'(0)=\nabla f(\gamma_k(0))^\top\gamma_k'(0)=\nabla f(x^k)^\top d_k.$
Hence, $\gamma_k$ is a descent curve for $f$ in $x^k$ if and only if $\varphi'_k(0)<0$.

Before turning to the specific definition and the analysis of the algorithmic framework, we give another useful definition, taking inspiration from the well-known concept of gradient-related direction \cite{cartis2022evaluation}.

\begin{definition}
	Let $\{x^k\}$ be the sequence of points generated by an iterative algorithm of the form \eqref{eq::CS}, with $\{\gamma_k\}$ being the corresponding sequence of search curves. We say that the latter is a sequence of curves \emph{gradient-related} to $\{x^k\}$ if the sequence of directions $\{\gamma_k'(0)\}$ is gradient-related to $\{x^k\}$, i.e., if there exist $c_1,c_2>0$ such that for all $k\geq0$ the following conditions hold:
	\begin{equation*}
		\nabla f(x^k)^\top \gamma_k'(0)\leq - c_1\|\nabla f(x^k)\|^2, \qquad \|\gamma_k'(0)\|\leq c_2\|\nabla f(x^k)\|.
	\end{equation*}    
\end{definition}

\subsection{Curve-searches}
While we have characterized the update rule of curve search algorithms \eqref{eq::CS}, we have yet to specify how to select a suitable value of $t_k$. Various options could be considered: an exhaustive review of such methods can be found in \cite{xu2016global}, which demonstrates how step-size selection techniques used in line-search methods can be easily extended to define corresponding curve-search strategies.

Here, we will focus on the \emph{Armijo-type Curve Search Rule} \cite{xu2016global}, which mimics the structure of the Armijo-type line search and requires the parameter $t_k\geq0$ to satisfy
$\varphi_k(t_k)\le \varphi_k(0)+\sigma t_k\varphi'_k(0),$
or, in other words, 
\begin{equation}\label{eq::ARcond}
	f(\gamma_k(t_k))\leq f(x^k)+\sigma t_k\nabla f(x^k)^\top d_k,
\end{equation}
where $\sigma\in(0,1)$.

The backtracking curve search procedure is described by the simple pseudocode reported in Algorithm \ref{alg:ACS}.

\begin{algorithm}
	\SetNoFillComment
	\caption{Armijo-type Curve Search}
	\label{alg:ACS}
	\KwData{$x^k\in\mathbb{R}^n,d_k\in\mathbb{R}^n,\ \gamma_k:[0,1]\to\mathbb{R}^n,\ \Delta_0 \in (0, 1],\ \sigma\in(0,1),\ \delta \in(0,1)$.}
	
	Set $t= \Delta_0$   
	
	\While{$f(\gamma_k(t))>f(x^k)+\sigma t \nabla f(x^k)^\top d_k$}{
		Set $t = \delta t$
	}
	Set $t_k=t $
	\\ \textbf{return} $t_k$
\end{algorithm}

We can immediately state a finite termination result for Algorithm \ref{alg:ACS}.
\begin{proposition}\label{prop::ACSprop}
	Let $x^k \in \mathbb{R}^n$ and $d_k \in \mathbb{R}^n$ such that $\nabla f(x^k)^\top d_k < 0$. Let $\gamma_k:[0,1]\to\mathbb{R}^n$ be a descent curve for $f$ in $x^k$ such that $\gamma_k(t) = \gamma(t;x^k,d_k,\xi_k)$ and Assumption \ref{ass::defcurve} is satisfied. Then, the Armijo-type curve search procedure (Algorithm \ref{alg:ACS}) terminates in a finite number of iterations, returning a stepsize $t_k > 0$ which satisfies equation \eqref{eq::ARcond}. Moreover, one of the two following conditions holds:
	\begin{enumerate}[(i)]
		\item $t_k=\Delta_0$;
		\item $t_k<\Delta_0$ and $f(\gamma_k(\frac{t_k}{\delta}))>f(x^k)+\sigma \frac{t_k}{\delta}\nabla f(x^k)^\top d_k.$
	\end{enumerate}
\end{proposition}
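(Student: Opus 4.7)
The plan is to mirror the classical finite-termination argument for Armijo's backtracking line search, exploiting the fact that under Assumption~\ref{ass::defcurve} the composition $\varphi_k(t)=f(\gamma_k(t))$ behaves, at $t=0$, exactly like $f$ along a descent direction $d_k$. First I would observe that by Assumption~\ref{ass::defcurve}(b)--(d) combined with the continuous differentiability of $f$, the chain rule yields that $\varphi_k$ is differentiable at $0$ with
\[
\varphi_k(0)=f(x^k),\qquad \varphi_k'(0)=\nabla f(x^k)^\top \gamma_k'(0)=\nabla f(x^k)^\top d_k<0,
\]
where the last inequality is the descent hypothesis on $\gamma_k$.

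Next I would prove finite termination by contradiction. Suppose the \texttt{while} loop never exits: then the sequence of trial stepsizes $t^{(j)}=\Delta_0\delta^{j}$ ($j=0,1,\ldots$) with $\delta\in(0,1)$ satisfies $t^{(j)}\downarrow 0$ and
\[
\varphi_k(t^{(j)})-\varphi_k(0)>\sigma\,t^{(j)}\,\varphi_k'(0)\qquad\text{for all }j.
\]
Dividing both sides by $t^{(j)}>0$ and letting $j\to\infty$, the left-hand side tends to $\varphi_k'(0)$ by definition of the derivative, yielding $\varphi_k'(0)\ge\sigma\,\varphi_k'(0)$, i.e.\ $(1-\sigma)\,\varphi_k'(0)\ge 0$. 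Since $\sigma\in(0,1)$ this forces $\varphi_k'(0)\ge 0$, contradicting $\varphi_k'(0)=\nabla f(x^k)^\top d_k<0$. Hence the loop must exit after finitely many reductions, and by construction the returned $t_k$ satisfies \eqref{eq::ARcond}.

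Finally, items (i)--(ii) are an immediate bookkeeping observation from the algorithm itself: either the test passes at the initial trial $t=\Delta_0$, giving $t_k=\Delta_0$, or at least one reduction $t\leftarrow \delta t$ has been executed, in which case the previous trial value $t_k/\delta\le \Delta_0$ was rejected, meaning precisely that $f(\gamma_k(t_k/\delta))>f(x^k)+\sigma(t_k/\delta)\nabla f(x^k)^\top d_k$. The main obstacle is really just the termination step; once the derivative identity $\varphi_k'(0)=\nabla f(x^k)^\top d_k$ is made explicit via Assumption~\ref{ass::defcurve}, the rest is the standard Armijo argument transplanted from the line-search setting.
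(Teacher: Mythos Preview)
Your argument is correct and matches the paper's approach: the paper defers the finite-termination step to \cite[Lemma~2.2]{xu2016global}, whose content is precisely the standard Armijo contradiction you spell out via $\varphi_k'(0)=\nabla f(x^k)^\top d_k<0$, and the dichotomy (i)--(ii) is handled identically as a direct consequence of the backtracking mechanism. In short, you have simply made explicit what the paper leaves to the cited reference.
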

\begin{proof}
	Following the proof of \cite[Lemma 2.2]{xu2016global}, we prove that the Armijo-type curve search procedure provides in a finite number of iterations a stepsize $t_k > 0$ satisfying the Armijo-type condition \eqref{eq::ARcond}. If no backtrack is needed, condition (i) straightforwardly holds; if, on the other hand, at least a backtrack step is carried out, then the stepsize $t_k/\delta$ is the last tested step not satisfying equation \eqref{eq::ARcond}; hence, condition (ii) holds in this case.
\end{proof}

\subsection{Convergence analysis}
\label{sec:conv_gen}

In this section, we state and prove the global convergence results for curve-search based algorithms. The convergence property under discussion is analogous to the one proved in \cite{xu2016global}, but is derived under different assumptions on the curves: indeed, we are going to show that an iterative method that follows the update \eqref{eq::CS}, with $t_k$ chosen so as to satisfy condition \eqref{eq::ARcond}, possesses global convergence guarantees if the descent curve $\gamma_k=\gamma(t;x^k,d_k,\xi_k)$ is such that $\{d_k\}$ is gradient-related and $\{\xi_k\}$ is bounded.

To prove this result we need to state some further, reasonable assumptions. 
\begin{assumption}
	\label{ass::L0}
	Let $x^0 \in \mathbb{R}^n$. The level set $\mathcal{L}_0 = \{x \in \mathbb{R}^n\mid f(x) \le f(x^0)\}$ is compact.
\end{assumption}

\begin{assumption}
	\label{ass::xi}
	Let $\{(x^k, \gamma(\cdot; x^k, d_k, \xi_k))\}$ be the sequence generated by an iterative algorithm
	of the form \eqref{eq::CS}. The sequence $\{\xi_k\}$ is bounded, i.e., $\exists C > 0$ such that $\|\xi_k\| \le C$ for all $k$.
\end{assumption}

\begin{proposition}\label{prop:globalconvm}
	Let Assumption \ref{ass::L0} hold. Let $\{(x^k,\gamma_k(\cdot;x^k,d_k,\xi_k))\}$ be the sequence generated by an iterative algorithm of the form \eqref{eq::CS} such that: 
	\begin{enumerate}[i)]
		\item stepsizes $t_k$ are chosen by the Armijo-type curve search procedure (Algorithm \ref{alg:ACS}), for all $k$;
		\item $\gamma_k(t) = \gamma(t;x^k,d_k,\xi_k)$ satisfies Assumption \ref{ass::defcurve};
		\item the sequence of curves $\{\gamma_k\}$ is gradient-related to $\{x^k\}$;
		\item Assumption \ref{ass::xi} holds for sequence $\{\xi_k\}$. 
	\end{enumerate}  Then, the sequence $\{x^k\}$ admits accumulation points and each accumulation point $\bar{x}$ is stationary, i.e., $\nabla f(\bar{x})=0$.
\end{proposition}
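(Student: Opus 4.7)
The plan is a proof by contradiction exploiting the standard Armijo/Zoutendijk-style argument, adapted to the curvilinear setting.

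First I would use the Armijo-type condition \eqref{eq::ARcond} together with the gradient-related lower bound $\nabla f(x^k)^\top d_k \leq -c_1\|\nabla f(x^k)\|^2$ to deduce that $\{f(x^k)\}$ is monotonically nonincreasing and that $x^k \in \mathcal{L}_0$ for every $k$. By Assumption \ref{ass::L0}, $\mathcal{L}_0$ is compact, so $\{x^k\}$ admits accumulation points and $\{f(x^k)\}$, being monotone and bounded below, converges. Telescoping the Armijo inequality then gives $\sum_k \sigma t_k c_1 \|\nabla f(x^k)\|^2 < \infty$, hence $t_k\|\nabla f(x^k)\|^2 \to 0$.

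Next, suppose for contradiction that some accumulation point $\bar x$ satisfies $\nabla f(\bar x)\neq 0$, and extract a subsequence $K$ along which $x^k \to \bar x$. By continuity $\|\nabla f(x^k)\|$ is bounded away from $0$ on $K$, forcing $t_k \to 0$ on $K$. So, for $k\in K$ large enough, $t_k < \Delta_0$ and by Proposition \ref{prop::ACSprop}(ii) the step $t_k/\delta$ violates the Armijo condition:
\begin{equation*}
f\!\left(\gamma_k\!\left(\tfrac{t_k}{\delta}\right)\right) > f(x^k) + \sigma\tfrac{t_k}{\delta}\nabla f(x^k)^\top d_k.
\end{equation*}
Applying the mean value theorem to $\varphi_k$ on $[0,t_k/\delta]$ yields some $\tau_k \in (0,t_k/\delta)$ such that $\varphi_k'(\tau_k) > \sigma \varphi_k'(0) = \sigma\,\nabla f(x^k)^\top d_k$, i.e.\ $\nabla f(\gamma_k(\tau_k))^\top \gamma_k'(\tau_k) > \sigma\,\nabla f(x^k)^\top d_k$.

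The concluding step takes limits along a further subsequence. Using the gradient-related bound $\|d_k\|\leq c_2\|\nabla f(x^k)\|$ (so $\{d_k\}$ is bounded on $K$) and Assumption \ref{ass::xi} (so $\{\xi_k\}$ is bounded), I refine $K$ so that $d_k \to \bar d$ and $\xi_k \to \bar\xi$. Since $\tau_k \to 0$, the continuity of $\gamma$ in $(t,x,d,\xi)$ (Assumption \ref{ass::defcurve}(a)) together with $\gamma(0;\bar x,\bar d,\bar\xi)=\bar x$ gives $\gamma_k(\tau_k)\to \bar x$, and continuity of $\gamma'$ together with $\gamma'(0;\bar x,\bar d,\bar\xi)=\bar d$ gives $\gamma_k'(\tau_k)\to \bar d$. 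Passing to the limit in the above inequality yields $\nabla f(\bar x)^\top \bar d \geq \sigma\,\nabla f(\bar x)^\top \bar d$, hence $(1-\sigma)\nabla f(\bar x)^\top \bar d \geq 0$. But gradient-relatedness, in the limit, gives $\nabla f(\bar x)^\top \bar d \leq -c_1\|\nabla f(\bar x)\|^2 < 0$, a contradiction.

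The routine steps are the telescoping argument and the mean value theorem manipulation; the delicate point I would be most careful about is the passage to the limit, ensuring I extract subsequences so that $d_k$ and $\xi_k$ converge (this is where boundedness of $\{d_k\}$ via gradient-relatedness and Assumption \ref{ass::xi} are both genuinely used) and invoking continuity of $\gamma'$ jointly in $(t,x,d,\xi)$ to identify the limit of $\gamma_k'(\tau_k)$ with $\bar d$.
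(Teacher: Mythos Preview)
Your proposal is correct and follows essentially the same route as the paper's proof: monotone descent via the Armijo condition and gradient-relatedness, existence of accumulation points by compactness of $\mathcal{L}_0$, $t_k\to 0$ along a nonstationary subsequence, the failed-step inequality from Proposition~\ref{prop::ACSprop}(ii), the mean value theorem applied to $\varphi_k$, and a limit passage after extracting convergent subsequences of $\{d_k\}$ and $\{\xi_k\}$. The only cosmetic difference is that you obtain $t_k\|\nabla f(x^k)\|^2\to 0$ by telescoping the Armijo inequalities, whereas the paper derives it directly from $f(x^{k+1})-f(x^k)\to 0$; both are equivalent here.
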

\begin{proof}
	For all $k$, we have, by the Armijo condition \eqref{eq::ARcond}, that 
	\begin{align*}
		f(x^{k+1})=  f(\gamma_k(t_k)) &\leq f(x^k) + \sigma t_k \nabla f(x^k)^\top d_k \\&\leq f(x^k) -\sigma c_1 t_k \|\nabla  
		f(x^k)\|^2<f(x^k),
	\end{align*} where the second inequality follows from the sequence $\{\gamma_k\}$ being gradient-related to $\{x^k\}$, and the last inequality from the non-negativeness of the norm.
	Thus, the sequence $\{f(x^k)\}$ is monotone decreasing and, by the compactness of $\mathcal{L}_0$ (Assumption \ref{ass::L0}), it follows that the sequence $\{x^k\}$ admits accumulation points, each one belonging to $\mathcal{L}_0$. Furthermore, the sequence $\{f(x^k)\}$ converges to some finite value $f^\star$ and 
	\begin{equation}\label{limfk}
		\lim_{k\to\infty} f(x^{k+1})-f(x^k)=0.
	\end{equation}
	
	Let $\bar{x}$ be one of the accumulation points, that is, there exists a sub-sequence $K\subseteq\{0,1,\dots\}$ such that $\lim_{k\in K,k\to\infty} x^k=\bar{x}$. Let us assume by contradiction that $\bar{x}$ is not stationary, i.e., $\|\nabla f(\bar{x})\| = \nu >0$.
	
	Again by the Armijo condition \eqref{eq::ARcond} and the sequence $\{\gamma_k\}$ being gradient-related to $\{x^k\}$, we obtain $$f(x^{k+1})-f(x^k)\leq -\sigma c_1t_k\|\nabla f(x^k)\|^2.$$
	Since the previous equation holds for every $k$, we can take the limit for $k\in K$ and $k\to\infty$ recalling equation \eqref{limfk}: $$0\leq\lim_{k\in K,k\to\infty} -\sigma c_1t_k\|\nabla f(x^k)\|^2,$$ and, thus, since $\sigma \in (0, 1)$, $c_1 > 0$ and the norm operator is non-negative, we get 
	\begin{equation*}
		\lim_{k\in K,k\to\infty} t_k\|\nabla f(x^k)\| = 0.
	\end{equation*}
	By the continuity of $\nabla f$ we have 
	\begin{equation}
		\label{eq::lim-grad}
		\lim_{k\in K,k\to\infty}\|\nabla f(x^k)\|=\|\nabla f(\bar x)\|=\nu>0.
	\end{equation}
	Hence, it must be that $\lim_{k\in K,k\to \infty}t_k=0.$
	
	The last limit implies that there exists $\bar k\in K$ such that $t_k<\Delta_0$ for all $k\in K$ and $k\geq\bar{k}.$ Then, by Proposition \ref{prop::ACSprop} we have $$f\left(\gamma_k\left(\frac{t_k}{\delta}\right)\right) > f(x^k) + \sigma\frac{t_k}{\delta}\nabla f(x^k)^\top d_k$$ for all $k\in K,\ k\geq \bar{k}$.
	On the other hand, by the Mean Value Theorem we can write, for all $k$,
	\begin{equation*}
		f\left(\gamma_k\left(\frac{t_k}{\delta}\right)\right) = f(\gamma_k(0)) + \frac{t_k}{\delta}\nabla f\left(\gamma_k\left(\theta_k\frac{t_k}{\delta}\right)\right)^\top\gamma_k'\left(\theta_k\frac{t_k}{\delta}\right),
	\end{equation*}
	with $\theta_k\in(0,1)$. Combining the last two results, we get, for all $k\in K$ and $k \ge \bar{k}$,
	\begin{equation}
		\label{eq::bef-lim}
		\nabla f\left(\gamma_k\left(\theta_k\frac{t_k}{\delta}\right)\right)^\top\gamma_k'\left(\theta_k\frac{t_k}{\delta}\right)>\sigma \nabla f(x^k)^\top d_k.
	\end{equation}
	By the gradient-related conditions, we have, for all $k$, $\|\gamma'_k(0)\| = \|d_k\|\leq c_2\|\nabla f(x^k)\|$.
	Recalling now equation \eqref{eq::lim-grad}, we get that the sequence $\{d_k\}_{k \in K}$ is bounded: there exists $K_1\subseteq K$ such that $\lim_{k\in K_1, k\to \infty} d_k=\bar{d}$. Moreover, by Assumption \ref{ass::xi}, the sequence $\{\xi_k\}$ is also bounded; thus, there exists $K_2\subseteq K_1$ such that $\lim_{k\in K_2,k\to\infty}\xi_k=\bar{\xi}$. By the continuity of $\gamma_k$ w.r.t.\ $t,x,d$ and $\xi$ (Assumption \ref{ass::defcurve}a), we then have
	\begin{equation*}
		\lim_{k\in K_2,k\to\infty}\gamma_k(t) = \lim_{k\in K_2,k\to\infty}\gamma(t;x^k,d_k,\xi_k) = \gamma(t;\bar{x},\bar{d},\bar{\xi}) = \bar{\gamma}(t).
	\end{equation*}
	Similarly, by the continuity of $\gamma_k'$ (Assumption \ref{ass::defcurve}c), we can conclude that $$\lim_{k\in K_2,k\to\infty}\gamma'_k(t) = \gamma'(t;\bar{x},\bar{d},\bar{\xi}) = \bar{\gamma}'(t).$$
	
	Now, taking the limit in \eqref{eq::bef-lim} for $k\in K_2,\ k\to\infty,$ recalling the two previous limits, the continuity of $\nabla f$, the boundedness of $\{d_k\}$, that $t_k\to_{K} 0$ and $\theta_k\in(0,1)$, we get that 
	\begin{equation*}
		\nabla f(\bar{\gamma}(0))^\top\bar{\gamma}'(0)= \nabla f(\bar{x})^\top\bar{d} \geq \sigma\nabla f(\bar{x})^\top \bar{d},
	\end{equation*}
	that is, $(1-\sigma)\nabla f(\bar{x})^\top \bar{d}\geq0$
	which implies $\nabla f(\bar x)^\top \bar d\geq0.$ 
	
	However, by the gradient-related conditions and equation \eqref{eq::lim-grad}, we know that
	\begin{equation*}
		\nabla f(\bar{x})^\top \bar{d} = \lim_{k\in K_2,k\to\infty}\nabla f(x^k)^\top d_k\leq \lim_{k\in K_2,k\to\infty} -c_1\|\nabla f(x^k)\|^2 =-c_1\nu^2<0,
	\end{equation*}
	which is a contradiction. We thus get the thesis.
\end{proof}

\begin{remark}
	In \cite{xu2016global}, convergence results for curve search methods were provided under the following assumptions: the curve search sequence $\{\gamma_k\}$ shall satisfy for all $k$
	\begin{equation}\label{eq::xuhp1}
		\|\gamma_k(t)-\gamma_k(0)\|\leq y_1(t)\qquad \text{ and } \qquad \frac{\|\gamma_k'(t)-\gamma_k'(0)\|}{\|\gamma_k'(0)\|}\leq y_2(t),
	\end{equation}
	where $y_1,y_2:\mathbb{R}_+\to\mathbb{R}_+$ are forcing functions - i.e., $\lim_{t\to0}y_i(t)=0,\ i=~1,2$ - independent on $k$.
	These assumptions are substantially different from the assumptions made in this work, as we will also underline later in the manuscript.
\end{remark}

\subsubsection{The nonmonotone case}
\label{sec:conv_nonm}

In this section, we deal again with an iterative process of the form \eqref{eq::CS}, but now the stepsize $t_k$ is chosen with a nonmonotone version of the Armijo-type curve search procedure. In other words, we aim to find a stepsize with Algorithm \ref{alg:ACS} but the termination condition asks $t_k$ to satisfy
\begin{equation}
	\label{eq::nm_ARcond}
	f(\gamma_k(t_k)) \le \max_{0\leq j \leq m(k)}f(x^{k-j})+\sigma t_k \nabla f(x^k)^\top d_k,
\end{equation}
where $m(0)=0$, $0\leq m(k)\leq\min\{m(k-1)+1,M\}$ for $k \ge 1$, and $M \in \mathbb{N}$.
This nonmonotone rule can be seen as the extension to curve search methods of the classical nonmonotone condition proposed in \cite{grippo1986nonmonotone}.

The backtracking procedure can be characterized by the following properties.
\begin{proposition}
	Let $x^k \in \mathbb{R}^n$ and $d_k \in \mathbb{R}^n$ such that $\nabla f(x^k)^\top d_k < 0$. Let $\gamma_k:[0,1]\to\mathbb{R}^n$ be a descent curve for $f$ in $x^k$ such that $\gamma_k(t) = \gamma(t;x^k,d_k,\xi_k)$ and Assumption \ref{ass::defcurve} is satisfied. Then, the nonmonotone Armijo-type curve search procedure terminates in a finite number of iterations, returning a stepsize $t_k > 0$ which satisfies equation \eqref{eq::nm_ARcond}. Moreover, one of the two following conditions holds:
	\begin{enumerate}
		\item $t_k=\Delta_0$;
		\item $t_k<\Delta_0$ and $f(\gamma_k(\frac{t_k}{\delta}))>\max_{0\leq j \leq m(k)}f(x^{k-j})+\sigma \frac{t_k}{\delta}\nabla f(x^k)^\top d_k.$
	\end{enumerate}
\end{proposition}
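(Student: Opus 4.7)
The plan is to adapt the proof of Proposition~\ref{prop::ACSprop} essentially verbatim, exploiting the simple observation that the nonmonotone acceptance rule is \emph{weaker} than the monotone one: since the index $0$ is included in the maximum, we have $f(x^k)\le\max_{0\le j\le m(k)}f(x^{k-j})$, so any $t$ that satisfies \eqref{eq::ARcond} automatically satisfies \eqref{eq::nm_ARcond}. Hence it suffices to show that the backtracking sequence $\Delta_0,\delta\Delta_0,\delta^2\Delta_0,\ldots$ eventually hits a value fulfilling the stricter monotone condition.

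To establish this, I would set $\varphi_k(t)=f(\gamma_k(t))$ and invoke Assumption~\ref{ass::defcurve}(c)–(d) together with the continuous differentiability of $f$ to conclude that $\varphi_k$ is continuously differentiable with $\varphi_k'(0)=\nabla f(x^k)^\top d_k<0$ (by the descent-curve hypothesis). A first-order Taylor expansion yields $\varphi_k(t)-\varphi_k(0)=t\varphi_k'(0)+o(t)$ as $t\to 0^+$; since $\sigma\in(0,1)$ and $\varphi_k'(0)<0$, there exists $\bar t>0$ such that $\varphi_k(t)\le\varphi_k(0)+\sigma t\varphi_k'(0)$ for every $t\in(0,\bar t\,]$. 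Because the backtracking loop decreases $t$ by the factor $\delta\in(0,1)$ at each pass, after finitely many reductions we obtain some $t\le\bar t$ satisfying the monotone Armijo inequality, and therefore also \eqref{eq::nm_ARcond}. This gives finite termination with a strictly positive returned stepsize $t_k$.

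Finally, the dichotomy between cases 1 and 2 follows directly from inspection of Algorithm~\ref{alg:ACS} run with the nonmonotone test: either the first trial value $t=\Delta_0$ is accepted, in which case $t_k=\Delta_0$ (case 1); or at least one backtrack is performed, in which case the last rejected trial is exactly $t_k/\delta$, meaning that $f(\gamma_k(t_k/\delta))>\max_{0\le j\le m(k)}f(x^{k-j})+\sigma\tfrac{t_k}{\delta}\nabla f(x^k)^\top d_k$ (case 2).

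There is no real obstacle here: the only new ingredient relative to Proposition~\ref{prop::ACSprop} is the domination inequality for the reference value, and everything else is a direct transcription. I would therefore present the argument concisely, stating the Taylor expansion step and then pointing to the structural part of the proof of Proposition~\ref{prop::ACSprop} for the case distinction.
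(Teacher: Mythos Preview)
Your proposal is correct and matches the paper's approach exactly: both arguments use the domination inequality $f(x^k)\le\max_{0\le j\le m(k)}f(x^{k-j})$ to reduce finite termination to Proposition~\ref{prop::ACSprop}, and then obtain the dichotomy by inspecting the backtracking loop. The only cosmetic difference is that you spell out the Taylor-expansion step explicitly, whereas the paper simply invokes Proposition~\ref{prop::ACSprop}.
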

\begin{proof}
	By Proposition \ref{prop::ACSprop} and the fact that $\max_{0\leq j \leq m(k)}f(x^{k-j}) \ge f(x^k)$, we get that the non-monotone Armijo-type curve search procedure finds, in a finite number of steps, a stepsize $t_k > 0$ such that $f(\gamma_k(t_k)) \le f(x^{k})+\sigma t_k \nabla f(x^k)^\top d_k \le \max_{0\leq j \leq m(k)}f(x^{k-j})+\sigma t_k \nabla f(x^k)^\top d_k$. Following the last part of the proof of Proposition \ref{prop::ACSprop}, we straightforwardly get the thesis.
\end{proof}

\begin{proposition}
	\label{prop::nm_globalconvm}
	Let Assumption \ref{ass::L0} hold. Let $\{(x^k,\gamma(\cdot;x^k,d_k,\xi_k))\}$ be the sequence generated by an iterative algorithm of the form (\ref{eq::CS}) such that stepsizes $t_k$ are chosen by the nonmonotone Armijo-type curve search procedure for all $k$; furthermore assume that hypotheses ii)-iii)-iv) of Proposition \ref{prop:globalconvm} hold. Then, the sequence $\{x^k\}$ admits accumulation points and there exists at least one accumulation point $\bar{x}$ which is stationary, i.e., $\nabla f(\bar{x})=0$.
\end{proposition}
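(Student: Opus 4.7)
The plan is to adapt the classical Grippo--Lampariello--Lucidi argument for nonmonotone line searches to our curvilinear setting, reusing most of the machinery developed in the proof of Proposition \ref{prop:globalconvm}.

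First I would introduce the index $l(k) \in \{k - m(k), \ldots, k\}$ such that $f(x^{l(k)}) = \max_{0 \le j \le m(k)} f(x^{k-j})$. Since $f(x^k)$ itself is included in this maximum, we immediately get $f(x^k) \le f(x^{l(k)})$; combined with the standard inductive argument (using $m(k+1) \le m(k)+1$ together with the nonmonotone Armijo condition \eqref{eq::nm_ARcond} and the gradient-related property, which makes the right-hand side of \eqref{eq::nm_ARcond} strictly less than $f(x^{l(k)})$), one shows that $\{f(x^{l(k)})\}$ is monotone nonincreasing. In particular $f(x^k) \le f(x^{l(k)}) \le f(x^0)$ for all $k$, hence $\{x^k\} \subseteq \mathcal{L}_0$; by Assumption \ref{ass::L0} the sequence admits accumulation points and $\{f(x^{l(k)})\}$ converges to some $\bar f$.

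Next, I would write the nonmonotone Armijo relation at step $l(k+1)-1$ (for $k$ large enough so $l(k+1) \ge 1$),
\begin{equation*}
f(x^{l(k+1)}) \le f(x^{l(l(k+1)-1)}) + \sigma\, t_{l(k+1)-1}\, \nabla f(x^{l(k+1)-1})^\top d_{l(k+1)-1},
\end{equation*}
and use the gradient-related condition to bound the inner product by $-c_1 \|\nabla f(x^{l(k+1)-1})\|^2$. Since both $f(x^{l(k+1)})$ and $f(x^{l(l(k+1)-1)})$ converge to $\bar f$, squeezing gives $\lim_{k\to\infty} t_{l(k)-1}\,\|\nabla f(x^{l(k)-1})\|^2 = 0$. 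A simple induction on $j$ (using the nonmonotone Armijo at step $l(k)-j$ and the convergence of $f(x^{l(k)-j})$ to $\bar f$, which propagates from $j-1$ to $j$) then extends this to $\lim_{k\to\infty} t_{l(k)-j}\,\|\nabla f(x^{l(k)-j})\|^2 = 0$ for every fixed $j \ge 1$, which is what we need to identify the candidate stationary accumulation point.

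Finally, I would extract the desired accumulation point: since $\{x^{l(k)-1}\} \subseteq \mathcal{L}_0$, there exists a subsequence $K$ with $x^{l(k)-1} \to \bar{x}$. Suppose, for contradiction, $\|\nabla f(\bar{x})\| = \nu > 0$. Then, along $K$ and for $k$ large, $\|\nabla f(x^{l(k)-1})\| \ge \nu/2$, forcing $t_{l(k)-1} \to 0$. For such $k$ the stepsize is obtained via backtracking, so the previous trial $t_{l(k)-1}/\delta$ violates \eqref{eq::nm_ARcond}; since $f(x^{l(k)-1})$ itself appears in the maximum, this implies
\begin{equation*}
f\!\left(\gamma_{l(k)-1}\!\left(\tfrac{t_{l(k)-1}}{\delta}\right)\right) > f(x^{l(k)-1}) + \sigma \tfrac{t_{l(k)-1}}{\delta}\nabla f(x^{l(k)-1})^\top d_{l(k)-1},
\end{equation*}
which is exactly the monotone inequality driving the contradiction in Proposition \ref{prop:globalconvm}. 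From this point the argument proceeds verbatim: apply the Mean Value Theorem, exploit the boundedness of $\{d_{l(k)-1}\}$ (via the gradient-related bound $\|d_k\|\le c_2\|\nabla f(x^k)\|$ and the compactness of $\mathcal{L}_0$) and of $\{\xi_{l(k)-1}\}$ (Assumption \ref{ass::xi}) to further refine the subsequence, pass to the limit using Assumption \ref{ass::defcurve}(a),(c), and derive $\nabla f(\bar x)^\top \bar d \ge 0$, contradicting $\nabla f(\bar x)^\top \bar d \le -c_1\nu^2 < 0$.

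The main obstacle is the first part: bootstrapping from the single relation $t_{l(k)-1}\|\nabla f(x^{l(k)-1})\|^2 \to 0$ to the analogous limit at every backward offset $j$, since one must carefully verify that the convergence $f(x^{l(k)-j}) \to \bar f$ propagates across the induction. Once that backbone is in place, the contradiction argument is a direct transcription of Proposition \ref{prop:globalconvm} applied to the subsequence $\{x^{l(k)-1}\}$ rather than $\{x^k\}$.
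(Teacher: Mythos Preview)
Your overall strategy and the final contradiction argument are exactly those of the paper: define $l(k)$, show $\{f(x^{l(k)})\}$ is nonincreasing and convergent, derive $t_{l(k)-1}\|\nabla f(x^{l(k)-1})\|^2\to 0$ from the Armijo inequality at step $l(k)-1$, pick an accumulation point of $\{x^{l(k)-1}\}$, and then replay verbatim the Mean-Value-Theorem contradiction of Proposition~\ref{prop:globalconvm} (your observation that the failed trial step also violates the \emph{monotone} inequality, because $f(x^{l(k)-1})$ sits below the max, is correct and is what makes this work).

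The one genuine issue is the middle paragraph, which you yourself flag as ``the main obstacle''. The induction on $j$ you sketch cannot be carried out under the hypotheses of this proposition. To pass from $t_{l(k)-(j-1)}\|\nabla f(x^{l(k)-(j-1)})\|^2\to 0$ to $f(x^{l(k)-(j-1)})\to\bar f$ (which is what the next inductive step needs) one must control $\|x^{l(k)-(j-1)+1}-x^{l(k)-(j-1)}\|=\|\gamma_{l(k)-(j-1)}(t_{l(k)-(j-1)})-\gamma_{l(k)-(j-1)}(0)\|$, and in the curve-search setting there is no bound of the form $\|\gamma_k(t)-\gamma_k(0)\|\le y(t\|d_k\|)$ available here. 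That extra forcing-function bound is precisely the additional hypothesis that distinguishes Proposition~\ref{prop::nm_globalconvm_strong} from Proposition~\ref{prop::nm_globalconvm}, and it is exactly what powers the full GLL induction in the appendix proof. Without it the propagation $f(x^{l(k)-j})\to\bar f$ simply does not follow, which is also why the present proposition only claims existence of \emph{one} stationary accumulation point rather than stationarity of all of them.

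The good news is that you never actually use the induction: your contradiction paragraph works entirely with $j=1$, and the limit $t_{l(k)-1}\|\nabla f(x^{l(k)-1})\|^2\to 0$ is obtained directly, with no inductive input. Delete the induction paragraph and the proof is complete and coincides with the paper's.
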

\begin{proof}
	Let us define $l(k)\in\mathbb{N}$ such that $k-m(k)\leq l(k)\leq k$ and $f(x^{l(k)})=\max_{0\leq j \leq m(k)}f(x^{k-j})$.
	Given $m(k+1)\leq m(k)+1$, we get by the Armijo condition \eqref{eq::nm_ARcond} that the sequence $\{f(x^{l(k)})\}$ is non-increasing: 
	\begin{gather*}
		\begin{aligned}
			f(x^{l(k+1)})&=\max_{0\leq j \leq m(k+1)} f(x^{k+1-j})\leq \max_{0\leq j\leq m(k)+1}f(x^{k+1-j})\\&=\max \left\{f(x^{k+1}),\max_{0\leq j \leq m(k)}f(x^{k-j})\right\}=\max\{f(x^{k+1}),f(x^{l(k)})\}=f(x^{l(k)}).
		\end{aligned}
	\end{gather*}
	By the compactness of $\mathcal{L}_0$ (Assumption \ref{ass::L0}) and the continuity of $f$, it thus follows that the sequence $\{f(x^{l(k)})\}$ converges to some finite value $f^\star$.
	
	Now, by the Armijo-type condition \eqref{eq::nm_ARcond} and the gradient-related conditions, it follows for $k>M$ that
	\begin{equation}
		\label{eq::for_app}
		\begin{aligned}
			f(x^{l(k)})&\leq\max_{0\leq j \leq m(l(k)-1)}f(x^{l(k)-1-j})+\sigma t_{l(k)-1}\nabla f(x^{l(k)-1})^\top d_{l(k)-1}\\
			& \le f(x^{l(l(k)-1)})-\sigma c_1t_{l(k)-1}\|\nabla f(x^{l(k)-1})\|^2.
		\end{aligned}
	\end{equation}
	The inequality holds for every $k$; hence, we can take the limit for $k\to\infty$ recalling $\lim_{k\to\infty}f(x^{l(k)})=f^\star$, $\sigma \in (0, 1)$, $c_1 > 0$ and the non-negativeness of the norm, and we get 
	\begin{equation}
		\label{eq::limlk_1}
		\lim_{k\to\infty}t_{l(k)-1}\|\nabla f(x^{l(k)-1})\|=0.
	\end{equation}
	
	Let us assume, by contradiction, that any of the accumulation points of $\{x^{l(k)-1}\}$ is nonstationary; in particular, let $K \subseteq \{0, 1, \ldots\}$ such that $\lim_{k \in K, k \to \infty} x^{l(k)-1} = \bar{x}$, with $\bar{x}$ nonstationary, that is, $\|\nabla f(\bar{x})\| = \nu > 0$. Hence, by \eqref{eq::limlk_1} it must be that $\lim_{k \in K, k \to \infty}t_{l(k)-1} = 0$. Following now a reasoning similar to the one in the last part of the proof of Proposition \ref{prop:globalconvm}, we get a contradiction and, therefore, all limit points of $\{x^{l(k)-1}\}_K$ are stationary.
\end{proof}

Proposition \ref{prop::nm_globalconvm} is interesting, as it provides us with a global convergence result for a nonmonotone algorithm even in absence of the property that the distance between two iterates goes to zero. While weaker, the proposition is enough to guarantee that we can obtain a stationary point with arbitrary accuracy in finite time.

To strengthen the result -- ensuring, as in the monotone case and the nonmonotone line search case of \cite{grippo1986nonmonotone}, that every accumulation point of the iterate sequence is stationary -- we could impose a further assumption on the sequence of curves, similar to the first condition in \eqref{eq::xuhp1}. The result is formalized in the following proposition. The proof is deferred to the Appendix, as it is can be straightforwardly obtained combining the ones of Proposition \ref{prop::nm_globalconvm} and of the Theorem in \cite{grippo1986nonmonotone}.

\begin{proposition}
	\label{prop::nm_globalconvm_strong}
	Let $\{(x^k,\gamma(\cdot;x^k,d_k,\xi_k))\}$ be the sequence generated by an iterative algorithm of the form (\ref{eq::CS}) such that all the assumptions and hypotheses of Proposition \ref{prop::nm_globalconvm} hold. Let us further assume that the sequence of curves $\{\gamma_k\}$ satisfies, for all $k$, the condition $\|\gamma_k(t) - \gamma_k(0)\| \le y(t\|d_k\|)$, with $y: \mathbb{R}_+\rightarrow\mathbb{R}_+$ being a forcing function. Then, the sequence $\{x^k\}$ admits accumulation points and each accumulation point $\bar{x}$ is stationary, i.e., $\nabla f(\bar{x})=0$.
\end{proposition}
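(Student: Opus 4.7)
The plan is to lift the classical Grippo--Lampariello--Lucidi induction argument \cite{grippo1986nonmonotone} into the curve-search framework, using Proposition \ref{prop::nm_globalconvm} as the base case and leveraging the new bound $\|\gamma_k(t)-\gamma_k(0)\|\le y(t\|d_k\|)$ to control the displacement between consecutive iterates. Recall that the proof of Proposition \ref{prop::nm_globalconvm} already provides the non-increasing sequence $\{f(x^{l(k)})\}$ converging to some $f^\star$ and the limit $t_{l(k)-1}\|\nabla f(x^{l(k)-1})\|\to 0$. Combining the latter with the gradient-related bound $\|d_{l(k)-1}\|\le c_2\|\nabla f(x^{l(k)-1})\|$ and the forcing-function property, I would immediately conclude $\|x^{l(k)}-x^{l(k)-1}\|\le y(c_2\, t_{l(k)-1}\|\nabla f(x^{l(k)-1})\|)\to 0$, and therefore, by uniform continuity of $f$ on the compact set $\mathcal L_0$, also $f(x^{l(k)-1})\to f^\star$.

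The core of the argument is an induction on $j\in\{0,1,\ldots,M+1\}$ establishing both $f(x^{l(k)-j})\to f^\star$ and $\|x^{l(k)-j}-x^{l(k)}\|\to 0$. For the step from $j=J$ to $j=J+1$, I would instantiate the derivation of \eqref{eq::for_app} at the shifted index $l(k)-J$ to obtain
\[
f(x^{l(k)-J}) \le f(x^{l(l(k)-J-1)}) - \sigma c_1\, t_{l(k)-J-1}\|\nabla f(x^{l(k)-J-1})\|^2.
\]
The left-hand side converges to $f^\star$ by the inductive hypothesis, and the first term on the right does as well (since $f(x^{l(n)})\to f^\star$), so $t_{l(k)-J-1}\|\nabla f(x^{l(k)-J-1})\|^2\to 0$. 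Because $\{t_k\}\subseteq(0,1]$ and $\|\nabla f\|$ is bounded on $\mathcal L_0$, this upgrades to $t_{l(k)-J-1}\|\nabla f(x^{l(k)-J-1})\|\to 0$, and the forcing-function argument plus the triangle inequality (with the inductive bound on $\|x^{l(k)-J}-x^{l(k)}\|$) close the induction.

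Since $l(k+M+1)\in[k+1,k+M+1]$, for every $k$ we can write $k+1=l(k+M+1)-j'$ for some $j'\in\{0,\ldots,M\}$; because the inductive conclusion holds for each of the finitely many shifts $j\in\{0,\ldots,M\}$, this forces the full sequence to satisfy $f(x^k)\to f^\star$. Plugging this into the nonmonotone Armijo condition \eqref{eq::nm_ARcond} together with gradient-relatedness yields
\[
f(x^{k+1})\le f(x^{l(k)}) - \sigma c_1\, t_k\|\nabla f(x^k)\|^2,
\]
so letting $k\to\infty$ gives $t_k\|\nabla f(x^k)\|^2\to 0$, and hence $t_k\|\nabla f(x^k)\|\to 0$ on the entire index set. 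The contradiction step then mirrors Proposition \ref{prop:globalconvm}: for any accumulation point $\bar x$ along $K$ with $\|\nabla f(\bar x)\|=\nu>0$, one has $t_k\to 0$ on $K$, backtracking yields
\[
f(\gamma_k(t_k/\delta)) > f(x^{l(k)}) + \sigma\frac{t_k}{\delta}\nabla f(x^k)^\top d_k \ge f(x^k) + \sigma\frac{t_k}{\delta}\nabla f(x^k)^\top d_k
\]
thanks to $f(x^{l(k)})\ge f(x^k)$, and the Mean Value Theorem plus extraction of convergent subsequences of $\{d_k\}$ and $\{\xi_k\}$ (via Assumptions \ref{ass::defcurve} and \ref{ass::xi}) reproduce verbatim the monotone-case conclusion $(1-\sigma)\nabla f(\bar x)^\top \bar d\ge 0$, contradicting the gradient-related bound $\nabla f(\bar x)^\top\bar d\le -c_1\nu^2<0$.

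I expect the main obstacle to be the careful index bookkeeping in the induction — in particular, tracking the nested expression $f(x^{l(l(k)-J-1)})$ and verifying that the propagation of the limit $f^\star$ survives the shift — together with the slightly nonstandard upgrade from $t_k\|\nabla f(x^k)\|^2\to 0$ to $t_k\|\nabla f(x^k)\|\to 0$, which requires both boundedness of $\{t_k\}$ in $(0,1]$ and of $\|\nabla f\|$ on $\mathcal L_0$. Once these technicalities are handled, the remainder is a direct combination of the monotone curve-search proof of Proposition \ref{prop:globalconvm} and the classical GLL scheme of \cite{grippo1986nonmonotone}.
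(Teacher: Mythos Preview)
Your proposal is correct and follows essentially the same GLL-style induction as the paper, with only cosmetic bookkeeping differences: the paper introduces $\hat l(k)=l(k+M+2)$ and bounds $\|x^{\hat l(k)}-x^{k+1}\|$ by a telescoping sum of at most $M+1$ terms, whereas you work directly with $l(k)$ and recover each index as $k+1=l(k+M+1)-j'$. One small slip worth fixing: writing $\|x^{l(k)}-x^{l(k)-1}\|\le y\bigl(c_2\,t_{l(k)-1}\|\nabla f(x^{l(k)-1})\|\bigr)$ tacitly assumes $y$ is monotone; the clean argument is that $t_{l(k)-1}\|d_{l(k)-1}\|\le c_2\,t_{l(k)-1}\|\nabla f(x^{l(k)-1})\|\to 0$, hence $y\bigl(t_{l(k)-1}\|d_{l(k)-1}\|\bigr)\to 0$ by the forcing-function property alone.
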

\begin{proof}
	See Appendix \ref{app::proof}.    
\end{proof}

\section{Searching along quadratic curves}
\label{sec::sqc}

The results we have presented in Section \ref{sec:gen_cs} hold for any curve $\gamma_k$ which satisfies Assumption \ref{ass::defcurve}. From now on, we will consider a specific family of curves that satisfies said assumption. In particular we will consider polynomial curves of degree two that can be written as
\begin{equation}\label{eq::gengamma2}
	\gamma(t;x,d,s)=x+td+t^2(s-d),
\end{equation}
where we will be setting $x=x^k$ from the sequence of iterates $\{x^k\}$, $d=d_k$ from  a gradient-related sequence of directions $\{d_k\}$ and $s=s_k\in\mathbb{R}^n$ from some bounded sequence of directions $\{s_k\}$. 

Again, we will be employing the following notation to ease readability:
\begin{equation*}
	\gamma_k(t)=\gamma(t;x^k,d_k,s_k).
\end{equation*}
Note that $\gamma_k(1)=x^k+d_k+(s_k-d_k)=x^k+s_k$, which means that the first tentative solution visited during a curve search (Algorithm \ref{alg:ACS}) with unit initial value for $t$ will always be the point $x^k+s_k$. 

\begin{remark}
	\label{rem::bc}
	We get some interesting insights if we change the basis in \eqref{eq::gengamma2} from the canonical one to Bernstein's, i.e., if we construct the curve as a Bézier curve of degree 2 \cite{farin2000essentials} generated by the set of control points $P_0 = x,\ P_1=x+\frac{1}{2}d$ and $P_2=x+s$ (see also Fig.\ \ref{fig:enter-label}): 
	\begin{equation*}\label{BC2}
		\gamma (t) = (1-t)^2 P_0 + 2t(1-t)P_1 + t^2P_2,\quad t\in[0,1].
	\end{equation*}
	The geometric properties of Bézier curves and their control points enhance interpretability, as the quadratic Bézier curve always passes through the endpoints $P_0$ and $P_2$, has initial velocity $2(P_1-P_0)$ and is contained in the convex hull of $P_0,P_1,P_2$. This implies that, by fixing the starting point $P_0$ and the end point $P_2$, one can control the shape of the curve by moving the intermediate point $P_1$ along the direction $d$, i.e., by varying the module of the initial velocity we steer the curve closer to either direction $d$ or $s$.
\end{remark}

\begin{figure}[h]
	\centering
	\includegraphics[width=0.6\textwidth]{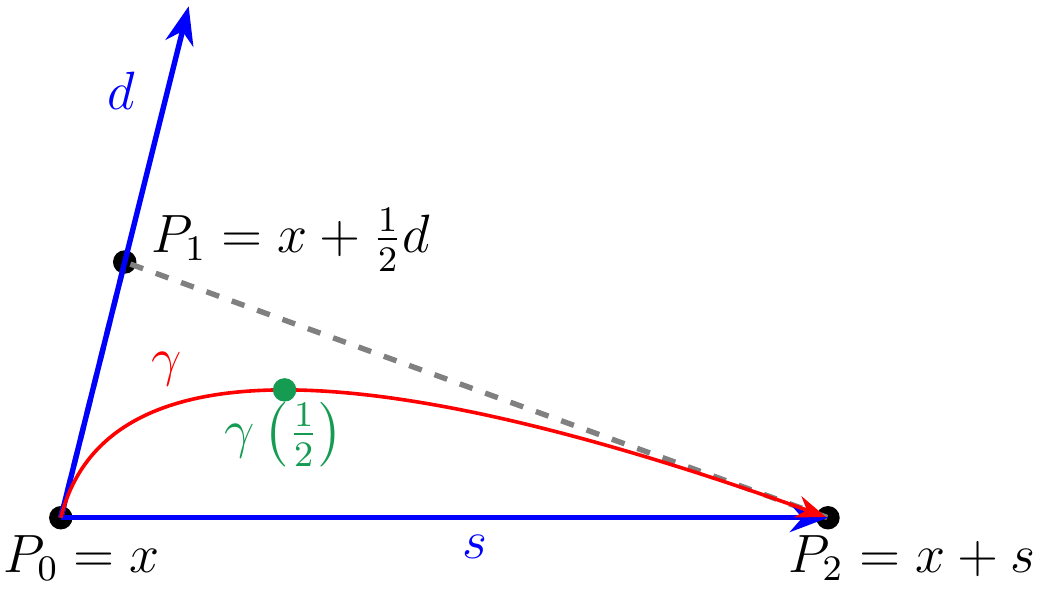}
	\caption{Quadratic search curve starting at $x$, ending at $x+s$ and with initial velocity $d$. The points $P_0=x$, $P_1 = x+\frac{1}{2}d$ and $P_2=x+s$ are the control points of Bézier's expression of the curve. } 
	\label{fig:enter-label}
\end{figure}

The family of curves of the form \eqref{eq::gengamma2} can induce different curve search methods, depending on the choices of $d_k$ and $s_k$ with respect to $x^k$.
The choice of $d_k$ arguably reduces to the negative gradient or, more in general, to a gradient-related direction, as that is the direction that drives convergence results; on the other hand, possible choices for $s_k\in\mathbb{R}^n$ include:
\begin{itemize}
	\item any direction such that $s_k-d_k$ is a nonascent direction of negative curvature, similarly as what is done in \cite{goldfarb1980curvilinear};
	\item Newton's direction $s_k=-[\nabla^2f(x^k)]^{-1}\nabla f(x^k)$;
	\item momentum term $s_k=x^{k}-x^{k-1}$;
	\item heavy-ball direction $s_k = -\alpha\nabla f(x^k)+ \beta(x^k-x^{k-1})$.
\end{itemize}
All these choices for $s_k$ share the property that the direction might not be a descent direction and, thus, line searches along these directions may not be sufficient to ensure the convergence of the algorithm.

In the next section, we will refine the general theoretical analysis already provided in Section \ref{sec:conv_gen} for this particular setting, with special focus on complexity results that, to the best of our knowledge, represent a first in the literature of curve search methods. 

\subsection{Theoretical analysis}
We start the formal analysis showing that curves of the form \eqref{eq::gengamma2} satisfy the assumptions of the more general case.
\begin{lemma}
	Curves of the form \eqref{eq::gengamma2} satisfy Assumption \ref{ass::defcurve}.
\end{lemma}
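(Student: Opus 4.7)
The plan is to verify directly the four conditions (a)--(d) of Assumption \ref{ass::defcurve} for the explicit family $\gamma(t;x,d,s) = x + td + t^2(s-d)$, identifying $\xi$ with $s$. The entire argument amounts to recognizing that $\gamma$ is a polynomial of degree two in $t$ whose coefficients are linear (hence continuous) functions of the parameter vectors $x$, $d$, $s$; no deeper analytic machinery is required.

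First I would address (a) and (b) together: since $\gamma$ is jointly polynomial in $(t,x,d,s)$, continuity in all four arguments is immediate, and evaluating at $t=0$ annihilates the $td$ and $t^2(s-d)$ terms, leaving $\gamma(0;x,d,s)=x$. Next, for (c) and (d), I would simply differentiate in $t$ to obtain
\begin{equation*}
\gamma'(t;x,d,s) = d + 2t(s-d),
\end{equation*}
which, again being polynomial, is continuous in $(t,x,d,s)$, and whose value at $t=0$ is exactly $d$.

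There is no genuine obstacle here: the lemma is a sanity check that the specific family \eqref{eq::gengamma2} falls under the umbrella of the general framework of Section \ref{sec:gen_cs}, so that Proposition \ref{prop:globalconvm} and Proposition \ref{prop::nm_globalconvm} apply once the gradient-relatedness of $\{d_k\}$ and boundedness of $\{\xi_k\}=\{s_k\}$ are ensured in later specializations. The only minor notational point I would be careful about is the identification of the generic ``auxiliary'' parameter $\xi$ in Assumption \ref{ass::defcurve} with the concrete direction $s$ used in \eqref{eq::gengamma2}, which I would remark on explicitly at the beginning of the proof to avoid any ambiguity.
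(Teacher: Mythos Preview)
Your proposal is correct and follows essentially the same approach as the paper's own proof: both verify conditions (a)--(d) of Assumption \ref{ass::defcurve} by direct inspection of the polynomial expression, computing $\gamma'(t;x,d,s)=d+2t(s-d)$ and evaluating at $t=0$. The only addition in your write-up is the explicit remark on identifying $\xi$ with $s$, which is a harmless clarification.
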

\begin{proof}
	Continuity of $\gamma(t;x,d,s)$  w.r.t.\ all its arguments can straightforwardly be inferred from its definition in \eqref{eq::gengamma2}. Condition $\gamma(0;x,d,s)=x$ also trivially holds by definition. The derivative w.r.t.\ $t$ always exists and is given by
	$\gamma'(t;x,d,s)=d+2(s-d)t$, which, in turn, is continuous for all the parameters under consideration; moreover, $\gamma'(0;x,d,s)=d$ always holds. Thus, the curve satisfies the conditions in Assumption \ref{ass::defcurve}. 
\end{proof}

We therefore work with a family of curves such that $\gamma_k(0)=x^k,\ \gamma_k'(0)=d_k$ and $\gamma_k(1)=x^k+s_k$. Interestingly, all the assumptions required for Proposition \ref{prop:globalconvm} are satisfied by this particular choice of the curves, as $d_k$ is gradient-related and we are assuming that $\xi_k=s_k$ is bounded. The global convergence result is thus immediately retrieved.

\begin{remark}
	We shall observe that curves of the form \eqref{eq::gengamma2} do not necessarily satisfy assumptions of the form \eqref{eq::xuhp1}, so that convergence results obtained for this class of curve search algorithms are indeed novel to the literature.
	
	First, if we assumed that both $\{d_k\}$ and $\{s_k\}$ are bounded (i.e., for all $k$, $\|d_k\| \le M_1$ and $\|s_k\| \le M_2$, with $M_1, M_2 > 0$), we could write
	\begin{align*}
		\|\gamma_k(t)-\gamma_k(0)\|&=\|x^k+td_k+t^2(s_k-d_k)-x^k\|\\&\leq t\|d_k\|+t^2\|s_k-d_k\|\\&\leq tM_1+t^2(M_1+M_2),
	\end{align*}
	and the first condition in \eqref{eq::xuhp1} would be satisfied with the forcing function $y_1(t)=tM_1+t^2(M_1+M_2)$.
	However, the same set of assumptions is not enough to guarantee the second condition in \eqref{eq::xuhp1}. Indeed, it holds
	\begin{equation*}
		\frac{\|\gamma_k'(t)-\gamma_k'(0)\|}{\|\gamma_k'(0)\|}= \frac{\|d_k+2t(s_k-d_k)-d_k\|}{\|d_k\|}=\frac{2t\|s_k-d_k\|}{\|d_k\|}.
	\end{equation*} 
	If we assume by contradiction that $y_2(t)$ exists such that $y_2(t)\to 0$ when $t\to 0$ and, for all $k$ and all $t$,
	$$\frac{\|\gamma_k'(t)-\gamma_k'(0)\|}{\|\gamma_k'(0)\|}\le y_2(t),$$
	we also have $\frac{2t\|s_k-d_k\|}{\|d_k\|}\le y_2(t),$ i.e., 
	$$\frac{\|s_k-d_k\|}{\|d_k\|}\le \frac{y_2(t)}{2t}.$$
	
	Now, we can consider the following sequences: $\{d_k\}$ such that $\|d_k\| = 1/k$ and $\{s_k\}$ bounded and such that $\|s_k-d_k\|\ge m$ for all $k$, where $m$ is a positive constant independent on $k$.
	We then have for all $t$ and $k$
	$$\frac{y_2(t)}{2t}\ge \frac{m}{1/k}= mk,$$
	i.e., $mk\le \frac{y_2(t)}{2t}$.
	Whatever the value of $\frac{y_2(t)}{2t}$ is, the above inequality will be violated for $k$ sufficiently large. This makes up for a contradiction. 
\end{remark}

\vspace{0.2cm}

To state complexity results for the proposed class of methods, we need to set an additional boundedness assumption on the second sequence of directions involved in the process. 
\begin{assumption}
	\label{ass::ds}
	There exists $c > 0$ such that $\|d_k\| \le c\|\nabla f(x^k)\|$ and $\|s_k\| \le c\|\nabla f(x^k)\|$ for all $k \in \{0,1,\ldots\}$.
\end{assumption}
Note that the above assumption concerns both directions $d_k$ and $s_k$, but the first inequality is already guaranteed to hold for some value $c$ by the gradient-related hypothesis made on the sequence $\{d_k\}$.
This new assumption allows us to state a preliminary result on the value of the objective function along the curvilinear path which will be crucial for setting bounds on the computational effort. 

\begin{proposition}
	\label{prop::L-smooth}
	Let $x^k \in \mathbb{R}^n$, $f:\mathbb{R}^n\to\mathbb{R}$ be an $L$--smooth function, i.e., $\|\nabla f(x_1) - \nabla f(x_2)\| \le L\|x_1 - x_2\|$ for all $x_1, x_2 \in \mathbb{R}^n$, and $\gamma_k: [0, 1] \to \mathbb{R}^n$ be a curve defined as in \eqref{eq::gengamma2}. Let Assumption \ref{ass::ds} hold. Then, the function $\varphi_k: [0, 1]\to\mathbb{R}$ defined as $\varphi_k(t) = f(\gamma_k(t))$ is $L_k$--smooth in the interval $[0, 1]$ with $L_k = (4c + 37c^2L)\|\nabla f(x^k)\|^2$.
\end{proposition}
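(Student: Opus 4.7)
The plan is to bound the Lipschitz constant of $\varphi_k'$ on $[0,1]$ directly from the definitions. Since $\varphi_k'(t) = \nabla f(\gamma_k(t))^\top \gamma_k'(t)$, I would start by writing, for any $t_1,t_2\in[0,1]$, the familiar add-and-subtract split
\begin{equation*}
\varphi_k'(t_1)-\varphi_k'(t_2) = \nabla f(\gamma_k(t_1))^\top(\gamma_k'(t_1)-\gamma_k'(t_2)) + (\nabla f(\gamma_k(t_1))-\nabla f(\gamma_k(t_2)))^\top \gamma_k'(t_2),
\end{equation*}
and bound the two pieces separately with Cauchy--Schwarz, so that the task reduces to bounding the four factors $\|\nabla f(\gamma_k(t_1))\|$, $\|\gamma_k'(t_1)-\gamma_k'(t_2)\|$, $\|\nabla f(\gamma_k(t_1))-\nabla f(\gamma_k(t_2))\|$ and $\|\gamma_k'(t_2)\|$ in terms of $|t_1-t_2|$ and $\|\nabla f(x^k)\|$.

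Next, using the explicit expressions $\gamma_k(t)=x^k+td_k+t^2(s_k-d_k)$ and $\gamma_k'(t)=d_k+2t(s_k-d_k)$, I would derive the elementary estimates
\begin{equation*}
\|\gamma_k'(t_1)-\gamma_k'(t_2)\|=2|t_1-t_2|\,\|s_k-d_k\|\le 2|t_1-t_2|(\|s_k\|+\|d_k\|),
\end{equation*}
\begin{equation*}
\|\gamma_k(t_1)-\gamma_k(t_2)\|\le|t_1-t_2|(\|d_k\|+(t_1+t_2)\|s_k-d_k\|)\le |t_1-t_2|(3\|d_k\|+2\|s_k\|),
\end{equation*}
and similarly $\|\gamma_k'(t_2)\|\le 3\|d_k\|+2\|s_k\|$ for $t_2\in[0,1]$, as well as $\|\gamma_k(t_1)-x^k\|\le 2\|d_k\|+\|s_k\|$. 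Applying Assumption~\ref{ass::ds} to each of these then replaces the bounds on $\|d_k\|$ and $\|s_k\|$ by multiples of $c\|\nabla f(x^k)\|$.

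Then I would invoke $L$-smoothness twice: first as $\|\nabla f(\gamma_k(t_1))-\nabla f(\gamma_k(t_2))\|\le L\|\gamma_k(t_1)-\gamma_k(t_2)\|$ to supply the needed factor of $|t_1-t_2|$ in the second piece, and second to bound
\begin{equation*}
\|\nabla f(\gamma_k(t_1))\|\le \|\nabla f(x^k)\|+L\|\gamma_k(t_1)-x^k\|\le (1+3cL)\|\nabla f(x^k)\|.
\end{equation*}
Substituting everything back yields
\begin{equation*}
|\varphi_k'(t_1)-\varphi_k'(t_2)|\le \bigl[4c(1+3cL)+25c^2L\bigr]\,|t_1-t_2|\,\|\nabla f(x^k)\|^2 = (4c+37c^2L)|t_1-t_2|\,\|\nabla f(x^k)\|^2,
\end{equation*}
which is the claim.

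The proof is essentially a controlled bookkeeping exercise: no single inequality is hard, but the main obstacle is choosing the grouping of bounds carefully so that the constants agree exactly with the announced $4c+37c^2L$ rather than some larger multiple. In particular, the factor $37c^2L$ arises as $12c^2L$ (from the $3cL$ in the bound on $\|\nabla f(\gamma_k(t_1))\|$ multiplied by $4c$) plus $25c^2L$ (from $L\cdot 5c\cdot 5c$ in the second piece), and getting the constants $3,2$ in the bounds on $\|\gamma_k(t_1)-x^k\|$ and $\|\gamma_k'(t_2)\|$ tight (using $t,t^2\le 1$ on $[0,1]$) is what keeps the final constant from inflating.
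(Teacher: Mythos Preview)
Your proof is correct and arrives at exactly the stated constant, but it uses a different decomposition from the paper's. You employ the standard add-and-subtract on the product $\nabla f(\gamma_k(t))^\top\gamma_k'(t)$, bounding $\|\nabla f(\gamma_k(t_1))\|\cdot\|\gamma_k'(t_1)-\gamma_k'(t_2)\|$ and $\|\nabla f(\gamma_k(t_1))-\nabla f(\gamma_k(t_2))\|\cdot\|\gamma_k'(t_2)\|$ separately. The paper instead substitutes the explicit affine form $\gamma_k'(t)=d_k+2t(s_k-d_k)$ first and regroups into $(\nabla f(\gamma_k(t_1))-\nabla f(\gamma_k(t_2)))^\top d_k$ plus $2(s_k-d_k)^\top(t_1\nabla f(\gamma_k(t_1))-t_2\nabla f(\gamma_k(t_2)))$, then performs a second add-and-subtract inside the latter to bound $\|t_1\nabla f(\gamma_k(t_1))-t_2\nabla f(\gamma_k(t_2))\|\le(1+8cL)\|\nabla f(x^k)\||t_1-t_2|$. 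Your route is arguably cleaner and more generic (it would work verbatim for any curve with bounded velocity and bounded displacement from $x^k$), whereas the paper's grouping leans on the specific quadratic structure; it is a pleasant coincidence that both yield precisely $4c+37c^2L$, via $4c(1+3cL)+25c^2L$ in your case and $5c^2L+4c(1+8cL)$ in the paper's.
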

\begin{proof}
	By definition of $\gamma_k$, we know that $\gamma'_k(t) = d_k + 2 t (s_k-d_k)$. We can then write
	\begin{gather*}
		\begin{aligned}
			|\varphi'_k(t_1) - \varphi'_k(t_2)| &= |\nabla f(\gamma_k(t_1))^\top\gamma'_k(t_1)-\nabla f(\gamma_k(t_2))^\top\gamma'_k(t_2)|\\&
			=|\nabla f(\gamma_k(t_1))^\top(d_k+2t_1(s_k-d_k))\\&\qquad-\nabla f(\gamma_k(t_2))^\top(d_k+2t_2(s_k-d_k))|\\&=
			|(\nabla f(\gamma_k(t_1))-\nabla f(\gamma_k(t_2)))^\top d_k\\&\qquad+2(s_k-d_k)^\top(t_1\nabla f(\gamma_k(t_1))-t_2\nabla f(\gamma_k(t_2)))|\\&
			\le \|\nabla f(\gamma_k(t_1))-\nabla f(\gamma_k(t_2))\|\|d_k\|\\&\qquad+2(\|s_k\|+\|d_k\|)\|t_1\nabla f(\gamma_k(t_1))-t_2\nabla f(\gamma_k(t_2))\|,
		\end{aligned}
	\end{gather*}
	where the last inequality comes from the triangle inequality.
	Now, we can bound the quantities in the last expression as follows:
	\begin{itemize}
		\item by Assumption \ref{ass::ds}, $\|d_k\|\le c\|\nabla f(x^k)\|$ and $\|s_k\|\le c\|\nabla f(x^k)\|$ for all $k$;
		\item we can obtain an upper bound for $\|\nabla f(\gamma_k(t_1))-\nabla f(\gamma_k(t_2))\|$, i.e.,
		\begin{gather*}
			\begin{aligned}
				\|\nabla f(\gamma_k(t_1))-\nabla f(\gamma_k(t_2))\|&\le L\|\gamma_k(t_1)-\gamma_k(t_2)\|\\&=L\|(t_1-t_2)d_k+(t_1^2-t_2^2)(s_k - d_k)\|\\&=
				L\|(t_1-t_2)(d_k+(t_1+t_2)(s_k - d_k))\|\\&
				\le L|t_1-t_2|(\|d_k\|+|t_1+t_2|(\|s_k\| + \|d_k\|))\\&\le
				L|t_1-t_2|(c\|\nabla f(x^k)\|+4c\|\nabla f(x^k)\|)\\&=
				5cL\|\nabla f(x^k)\||t_1-t_2|;
			\end{aligned}
		\end{gather*}
		\item an upper bound can be found for $\|t_1\nabla f(\gamma_k(t_1))-t_2\nabla f(\gamma_k(t_2))\|$:
		\begin{gather*}
			\begin{aligned}
				\|t_1\nabla f(\gamma_k(t_1))&-t_2\nabla f(\gamma_k(t_2))\|\\&=
				\|t_1\nabla f(\gamma_k(t_1))-t_2\nabla f(\gamma_k(t_2))\\&\qquad+t_1\nabla f(\gamma_k(t_2))-t_1\nabla f(\gamma_k(t_2))\|\\&\le
				|t_1-t_2|\|\nabla f(\gamma_k(t_2))\|\\&\qquad+|t_1|\|\nabla f(\gamma_k(t_1))-\nabla f(\gamma_k(t_2))\|\\&\le
				|t_1-t_2|\|\nabla f(\gamma_k(t_2))+\nabla f(x^k)-\nabla f(x^k)\|\\&\qquad+5cL|t_1|\|\nabla f(x^k)\||t_1-t_2|\\&\le
				|t_1-t_2|(\|\nabla f(x^k)\|+\|\nabla f(\gamma_k(t_2))-\nabla f(x^k)\|)\\&\qquad+5cL\|\nabla f(x^k)\||t_1-t_2|\\&\le
				|t_1-t_2|(\|\nabla f(x^k)\|+L\|\gamma_k(t_2)-x^k\|)\\&\qquad+5cL\|\nabla f(x^k)\||t_1-t_2|
				\\&=
				|t_1-t_2|(\|\nabla f(x^k)\|+L\|t_2d_k+t_2^2(s_k-d_k)\|)\\&\qquad+5cL\|\nabla f(x^k)\||t_1-t_2|\\&\le
				|t_1-t_2|(\|\nabla f(x^k)\|+L(|t_2|\|d_k\|+t_2^2(\|s_k\|+\|d_k\|)))\\&\qquad+5cL\|\nabla f(x^k)\||t_1-t_2|\\&\le
				|t_1-t_2|(\|\nabla f(x^k)\|+3cL\|\nabla f(x^k)\|)\\&\qquad+5cL\|\nabla f(x^k)\||t_1-t_2|\\&
				=(1+3cL)\|\nabla f(x^k)\||t_1-t_2|\\&\qquad+5cL\|\nabla f(x^k)\||t_1-t_2|\\&=
				(1+8cL)\|\nabla f(x^k)\||t_1-t_2|,
			\end{aligned}
		\end{gather*}
		where the third and sixth inequalities come from $t_1, t_2 \in [0, 1]$.
	\end{itemize}
	Putting everything together, we finally get the thesis:
	\begin{gather*}
		\begin{aligned}
			|\varphi'_k(t_1)-\varphi'_k(t_2)|&\le 5c^2L\|\nabla f(x^k)\|^2|t_1-t_2|+4c(1+8cL)\|\nabla f(x^k)\|^2|t_1-t_2|\\&
			=(4c + 37c^2L)\|\nabla f(x^k)\|^2|t_1-t_2|.
		\end{aligned}
	\end{gather*}
\end{proof}

\noindent The descent lemma \cite{bertsekas1999nonlinear} then holds for the $L_k$--smooth function $\varphi_k: [0, 1]\to\mathbb{R}$: 
\begin{equation}
	\label{eq::descent}
	\varphi_k(t) \le \varphi_k(0) + t\varphi'_k(0) + t^2\frac{L_k}{2}, \qquad \forall t \in [0, 1].
\end{equation}

\begin{proposition}
	\label{prop::delta_low}
	Let $f:\mathbb{R}^n\to\mathbb{R}$ be an $L$--smooth function. Let $\{(x^k, \gamma_k)\}$ be the sequence generated by an iterative algorithm of the form \eqref{eq::CS} such that: for all $k$, $\gamma_k$ is defined as in \eqref{eq::gengamma2} and hypotheses i)-ii)-iii) of Proposition \ref{prop:globalconvm} and Assumption \ref{ass::ds} hold. Then, for all $k$, the Armijo condition \eqref{eq::ARcond} is satisfied for all $\alpha\in[0,\Delta_{low}]$, with $\Delta_{low}=\frac{2c_1(1-\sigma)}{4c+37c^2L}$.
\end{proposition}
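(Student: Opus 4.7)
The plan is to combine the descent lemma for the restricted function $\varphi_k$ (equation \eqref{eq::descent}, which follows from Proposition \ref{prop::L-smooth}) with the gradient-related condition on $d_k$, and then solve for the largest stepsize that makes the resulting quadratic upper bound lie below the Armijo affine bound.

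First, I would rewrite the Armijo condition \eqref{eq::ARcond} in terms of $\varphi_k$: we want $\varphi_k(t) \le \varphi_k(0) + \sigma t \varphi_k'(0)$, since $\varphi_k'(0) = \nabla f(x^k)^\top d_k$. By \eqref{eq::descent} it suffices to show that
\begin{equation*}
\varphi_k(0) + t\varphi_k'(0) + t^2 \tfrac{L_k}{2} \le \varphi_k(0) + \sigma t \varphi_k'(0),
\end{equation*}
which, after cancellation and dividing by $t > 0$, is equivalent to
\begin{equation*}
t \tfrac{L_k}{2} \le (1-\sigma)(-\varphi_k'(0)).
\end{equation*}

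Next, I would lower-bound $-\varphi_k'(0) = -\nabla f(x^k)^\top d_k$ using the gradient-related hypothesis (item iii of Proposition \ref{prop:globalconvm}), which gives $-\varphi_k'(0) \ge c_1 \|\nabla f(x^k)\|^2$, and plug in the explicit $L_k = (4c + 37c^2 L)\|\nabla f(x^k)\|^2$ from Proposition \ref{prop::L-smooth}. The sufficient condition on $t$ then becomes
\begin{equation*}
t \le \frac{2 c_1 (1-\sigma)\|\nabla f(x^k)\|^2}{(4c + 37 c^2 L)\|\nabla f(x^k)\|^2} = \frac{2c_1(1-\sigma)}{4c + 37c^2 L} = \Delta_{low},
\end{equation*}
and the $\|\nabla f(x^k)\|^2$ factors cancel, yielding a uniform (iteration-independent) threshold.

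The only subtlety is the case $\nabla f(x^k) = 0$, in which case $L_k = 0$ and $\varphi_k'(0) = 0$, so the inequality holds trivially for every $t \in [0,1]$ (and in particular on $[0,\Delta_{low}]$); I would mention this briefly to close the argument. No step looks like a real obstacle: the work is already done by Proposition \ref{prop::L-smooth} and the descent lemma, and the remainder is a one-line algebraic inversion of a quadratic-vs-linear inequality.
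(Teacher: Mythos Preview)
Your proof is correct and follows essentially the same approach as the paper: the paper argues by contrapositive (assume the Armijo condition fails at $t$ and derive $t>\Delta_{low}$), while you argue directly, but both rely on exactly the same two ingredients, namely the descent inequality \eqref{eq::descent} from Proposition \ref{prop::L-smooth} and the gradient-related bound $-\varphi_k'(0)\ge c_1\|\nabla f(x^k)\|^2$. Your explicit treatment of the case $\nabla f(x^k)=0$ is a nice touch that the paper leaves implicit.
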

\begin{proof}
	Let us assume that, at iteration $k$, a step $t$ does not satisfy the curvilinear Armijo condition, i.e., $\varphi_k(t)-\varphi_k(0)>\sigma t \varphi'_k(0)$. By Proposition \ref{prop::L-smooth} and equation \eqref{eq::descent}, we can also write $\varphi_k(t)-\varphi_k(0)\leq t \varphi_k'(0)+t^2\frac{L_k}{2}$. Thus, combining the two inequalities, we get $\sigma t \varphi'_k(0)<t \varphi_k'(0)+t^2\frac{L_k}{2}$ and, rearranging and dividing by $t$, $(1-\sigma)\varphi_k'(0)+t\frac{L_k}{2}>0$.
	
	Now, recalling that, by definition of $\varphi_k$ and Assumption \ref{ass::defcurve}, we have $\varphi_k'(0)=\nabla f(x^k)^\top\gamma'_k(0)=\nabla f(x^k)^\top d_k$, we can rewrite the last result as
	\begin{equation*}
		0 < (1-\sigma)\nabla f(x^k)^\top d_k+t\frac{L_k}{2} \le -c_1(1-\sigma)\|\nabla f(x^k)\|^2+t\frac{L_k}{2},
	\end{equation*}
	where the last inequality comes from the gradient-related conditions and the fact that $\sigma \in (0, 1)$.
	Thus, by definition of $L_k$ (Proposition \ref{prop::L-smooth}), we finally obtain
	\begin{equation*}
		t>\frac{2c_1(1-\sigma)\|\nabla f(x^k)\|^2}{L_k} = \frac{2c_1(1-\sigma)}{4c+37c^2L},
	\end{equation*}
	which completes the proof.
\end{proof}

We are finally able to state the main complexity result.

\begin{proposition}
	Let $f:\mathbb{R}^n\to\mathbb{R}$ be an $L$--smooth function and Assumption \ref{ass::L0} hold. Let $\{(x^k, \gamma_k)\}$ be the sequence generated by an iterative algorithm of the form \eqref{eq::CS} such that the hypotheses and assumptions of Proposition \ref{prop::delta_low} hold. Then, for any $\epsilon > 0$, at most $k_\epsilon$ iterations are needed to produce an iterate $x^k$ such that $\|\nabla f(x^k)\| < \epsilon$, with
	\begin{equation*}
		k_\epsilon \le \frac{f(x^0) - f^\star}{\sigma c_1 \min\{\Delta_0, \delta\Delta_{low}\}\epsilon^2} = \mathcal{O}(\epsilon^{-2}),
	\end{equation*}
	where $f^\star$ is a lower bound of function $f$.
\end{proposition}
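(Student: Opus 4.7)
The plan is to combine the per-iteration decrease guaranteed by the Armijo-type condition with a uniform lower bound on the step size $t_k$, and then sum the resulting decreases over iterations in which the gradient is still large.

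First, I would establish that $t_k \ge \min\{\Delta_0, \delta \Delta_{low}\}$ for every $k$. By Proposition \ref{prop::ACSprop}, either $t_k = \Delta_0$, in which case no backtracking occurred, or $t_k < \Delta_0$ and the previous trial step $t_k/\delta$ failed the Armijo condition \eqref{eq::ARcond}. In the latter case, Proposition \ref{prop::delta_low} tells us that any $\alpha \in [0, \Delta_{low}]$ satisfies the Armijo condition; hence the failed trial must satisfy $t_k/\delta > \Delta_{low}$, i.e., $t_k > \delta \Delta_{low}$. Combining the two cases gives the uniform lower bound $t_k \ge \min\{\Delta_0, \delta \Delta_{low}\}$.

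Next, I would apply the Armijo condition \eqref{eq::ARcond} together with the gradient-related property of $\{d_k\}$ to write, for every $k$,
\begin{equation*}
f(x^{k+1}) - f(x^k) \le \sigma t_k \nabla f(x^k)^\top d_k \le -\sigma c_1 t_k \|\nabla f(x^k)\|^2 \le -\sigma c_1 \min\{\Delta_0, \delta \Delta_{low}\} \|\nabla f(x^k)\|^2.
\end{equation*}
Now I would reason by contradiction on the claim: suppose $\|\nabla f(x^j)\| \ge \epsilon$ for all $j = 0, 1, \dots, k-1$. Telescoping the above inequality across these iterations and using $f(x^k) \ge f^\star$ yields
\begin{equation*}
f^\star - f(x^0) \le f(x^k) - f(x^0) \le -\sigma c_1 \min\{\Delta_0, \delta \Delta_{low}\} \, k \, \epsilon^2,
\end{equation*}
from which the stated bound $k \le (f(x^0) - f^\star)/(\sigma c_1 \min\{\Delta_0, \delta \Delta_{low}\} \epsilon^2)$ follows immediately. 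Hence within $k_\epsilon$ iterations some iterate must satisfy $\|\nabla f(x^k)\| < \epsilon$.

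I do not anticipate a genuine obstacle here: the only nontrivial ingredient is the uniform lower bound on $t_k$, and that is essentially handed to us by Proposition \ref{prop::delta_low} together with the backtracking dichotomy of Proposition \ref{prop::ACSprop}. The rest is a textbook sum-telescoping argument. Care should be taken that the statement refers to the \emph{first} iteration index at which $\|\nabla f(x^k)\| < \epsilon$, so that one uses the lower bound on the gradient only at the preceding iterations; this is standard and poses no difficulty.
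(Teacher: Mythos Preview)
Your proof is correct and is precisely the standard argument the paper defers to (it cites \cite[Theorem 2.2.2]{cartis2022evaluation}, invoking Propositions~\ref{prop::L-smooth}--\ref{prop::delta_low} and the existence of $f^\star$ via Assumption~\ref{ass::L0}). The two key ingredients---the uniform stepsize lower bound from the backtracking dichotomy plus Proposition~\ref{prop::delta_low}, and the telescoping of the Armijo decrease---are exactly what that textbook proof does.
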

\begin{proof}
	The proof is analogous to the one of \cite[Theorem 2.2.2]{cartis2022evaluation}, taking into account Propositions \ref{prop::L-smooth}-\ref{prop::delta_low} and that $f^\star$ is well-defined by Assumption \ref{ass::L0}.
\end{proof}

\begin{remark}
	Note that, by Proposition \ref{prop::delta_low}, there exists a constant interval of stepsize values along the curves for which the Armijo-type condition is satisfied. As a result, the number of backtracks at each iteration is bounded. It then follows immediately that the complexity bound $\mathcal{O}(\epsilon^{-2})$ also applies to the total number of function evaluations. The same bound holds for gradient evaluations for all those methods that, at each iteration, compute the gradient only at the current point.
\end{remark}

\section{The case of heavy-ball}
\label{sec::heavyball}

As anticipated in the Introduction, in this work we are particularly interested in exploiting the framework \eqref{eq::CS} with curves of the form \eqref{eq::gengamma2} as a way to properly induce global convergence properties for Polyak's heavy-ball method~\cite{polyak1964some}. We will thus consider $s_k$ as the heavy-ball direction, that is
\begin{equation}
	\label{eq::hb_sk}
	s_k = -\alpha\nabla f(x^k) + \beta(x^k-x^{k-1}),
\end{equation}
where $\alpha,\beta\in\mathbb{R}_+$ are fixed positive parameters.

The standard heavy ball method at each iteration then always accepts the update 
\begin{equation}\label{HB0}
	x^{k+1} = x^k +s_k,
\end{equation}
without carrying out any type of line-search or other types of controls.

Both in \cite{polyak1964some} and \cite{polyak1987introduction} the convergence of heavy-ball method is proved for a twice continuously differentiable, strongly convex objective function with a Lipschitz continuous gradient, as is stated in the following theorem.
\begin{proposition}[{\cite{polyak1987introduction}}]\label{HB2condiffSCGLC}
	Let $x^\star$ be a nonsingular minimum point of a twice differentiable function $f:\mathbb{R}^n\to\mathbb{R}$. Then, for
	\begin{equation*}
		0\leq\beta\leq1,\quad0<\alpha<\frac{2(1+\beta)}{L},\quad \mu I_n\preceq \nabla^2 f(x^\star)\preceq LI_n,
	\end{equation*}
	we can find an $\epsilon>0$ such that for any $x^0,x^1$, $\|x^0-x^\star\|\leq\epsilon,\|x^1-x^\star\|\leq\epsilon,$ method (\ref{HB0})  converges to $x^\star$ with the rate of geometric progression:
	\begin{equation*}
		\|x^k-x^\star\|\leq c(\bar\delta)(q+\bar\delta)^k,\quad 0\leq q<1,\quad 0<\bar\delta<1-q.
	\end{equation*}
	The quantity $q$ is minimal and equal to
	\begin{equation}
		\label{eq::abstar}
		q^\star=\frac{\sqrt{L}-\sqrt{\mu}}{\sqrt{L}+\sqrt{\mu}}\quad\text{for}\quad \alpha^\star=\frac{4}{(\sqrt{L}+\sqrt{\mu})^2},\quad \beta^\star=\Big(\frac{\sqrt{L}-\sqrt{\mu}}{\sqrt{L}+\sqrt{\mu}}\Big)^2.
	\end{equation}
\end{proposition}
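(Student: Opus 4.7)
The plan is to reduce the claim to a linear dynamical systems question by linearizing $\nabla f$ around $x^\star$, and then to handle the nonlinearity by a standard perturbation argument.

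\textbf{Step 1: local linearization.} Set $H=\nabla^2 f(x^\star)$ and $e^k=x^k-x^\star$. Since $\nabla f(x^\star)=0$ and $f$ is $C^2$, Taylor's formula gives $\nabla f(x^k)=H e^k+r(e^k)$ with $\|r(e^k)\|=o(\|e^k\|)$ as $e^k\to 0$. Substituting into the heavy-ball update \eqref{HB0}--\eqref{eq::hb_sk} yields
\begin{equation*}
e^{k+1}=\bigl((1+\beta)I-\alpha H\bigr)e^k-\beta\, e^{k-1}-\alpha r(e^k).
\end{equation*}
I would then rewrite this as a first-order recurrence on the augmented state $z^k=(e^{k\top},e^{k-1\top})^\top\in\mathbb{R}^{2n}$:
\begin{equation*}
z^{k+1}=T\,z^k+\eta_k,\qquad T=\begin{pmatrix}(1+\beta)I-\alpha H & -\beta I\\ I & 0\end{pmatrix},\qquad \eta_k=\begin{pmatrix}-\alpha r(e^k)\\0\end{pmatrix}.
\end{equation*}

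\textbf{Step 2: spectral analysis of $T$.} Diagonalize $H$ as $H=Q\Lambda Q^\top$ with eigenvalues $\lambda_1,\dots,\lambda_n\in[\mu,L]$. A block-diagonalization of $T$ then shows that its $2n$ eigenvalues are the roots of the $n$ quadratics
\begin{equation*}
z^2-(1+\beta-\alpha\lambda_i)z+\beta=0,\qquad i=1,\dots,n.
\end{equation*}
For each $i$, when the discriminant $(1+\beta-\alpha\lambda_i)^2-4\beta$ is nonpositive the two roots are complex conjugates of modulus $\sqrt{\beta}$; otherwise they are real and one can compute the spectral radius explicitly. A short case analysis (the heart of Polyak's argument) shows that the spectral radius $\rho(T)$ equals $\sqrt{\beta}$ exactly when $(1+\sqrt{\beta})^2\ge \alpha L$ and $(1-\sqrt{\beta})^2\le \alpha\mu$, and is strictly smaller than $1$ under the stated bounds on $\alpha,\beta$. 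Minimizing $\rho(T)$ over admissible $(\alpha,\beta)$ subject to those two inequalities reduces to equating both of them, and solving the resulting system yields precisely the optimal pair in \eqref{eq::abstar} together with $q^\star=\sqrt{\beta^\star}=(\sqrt{L}-\sqrt{\mu})/(\sqrt{L}+\sqrt{\mu})$.

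\textbf{Step 3: from spectral radius to contraction.} Because $\rho(T)=q<1$, for every $\bar\delta\in(0,1-q)$ there exists a matrix norm $\|\cdot\|_\star$ on $\mathbb{R}^{2n}$ (Householder/Jordan-based) with $\|T\|_\star\le q+\bar\delta/2$. I would then pick a radius $\epsilon>0$ small enough that, inside the ball $B(x^\star,\epsilon)$, the remainder satisfies $\alpha\|r(e)\|\le(\bar\delta/2)\|e\|_\star\le(\bar\delta/2)\|z\|_\star$ (possible by $\|r(e)\|=o(\|e\|)$). Provided the initial pair $x^0,x^1$ lies in $B(x^\star,\epsilon)$, an induction gives $\|z^{k+1}\|_\star\le(q+\bar\delta)\|z^k\|_\star$, whence $z^k$ stays in the ball forever and $\|e^k\|\le c(\bar\delta)(q+\bar\delta)^k$ by equivalence of norms, which is the claimed geometric rate.

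\textbf{Main obstacle.} The case analysis in Step 2 — ensuring that the worst eigenvalue over $\lambda_i\in[\mu,L]$ is controlled simultaneously and that the bounds $0\le\beta\le 1$ and $0<\alpha<2(1+\beta)/L$ precisely match the stability region of the family of quadratics — is the delicate algebraic core; the optimization for $(\alpha^\star,\beta^\star)$ follows from it but requires careful bookkeeping. Step 3, while conceptually routine, must be done with a norm adapted to $T$ since $T$ need not be normal, so $\|T^k\|$ is controlled only asymptotically by $\rho(T)^k$ and one cannot use the Euclidean norm directly.
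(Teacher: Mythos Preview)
The paper does not give its own proof of this proposition; it is quoted verbatim from Polyak's monograph and used as background. Your sketch is precisely the classical argument from that source: linearize the recursion at $x^\star$, reduce the spectral analysis of the $2n\times 2n$ companion matrix $T$ to the $n$ scalar quadratics $z^2-(1+\beta-\alpha\lambda_i)z+\beta=0$, read off the stability region via the Schur--Cohn conditions, optimize by forcing all discriminants to vanish at the endpoints $\lambda=\mu,L$, and finish with a norm-adapted perturbation argument. This is correct and matches what the paper is citing.

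One small wording issue in Step~2: it is not that $\rho(T)=\sqrt{\beta}$ \emph{exactly} on the set $\{(1+\sqrt\beta)^2\ge\alpha L,\ (1-\sqrt\beta)^2\le\alpha\mu\}$ and nowhere else; rather, on that set all roots are complex with modulus $\sqrt\beta$, while outside it some roots are real and $\rho(T)>\sqrt\beta$. The optimization therefore pushes you to the boundary of that region, which is how you obtain \eqref{eq::abstar}. This does not affect the validity of your argument, only the phrasing.
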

In the most general case, Proposition \ref{HB2condiffSCGLC} proves the local convergence of the heavy-ball method; however, for a quadratic function $f$ the method is globally convergent \cite{ghadimi2015global,polyak1964some,saab2022adaptive}.

Now, one issue concerns the globalization of this algorithm in nonconvex settings. Clearly, the vanilla algorithm needs some adjustment, even just because we have no hint about good values for $\alpha^\star$ and $\beta^\star$ in this scenario. Strategies from the literature to address this problem mainly include:
\begin{itemize}
	\item shrinking the tentative value of $\beta$ until $s_k$ becomes a descent direction \cite{fan2023msl};
	\item determining $\alpha$ and $\beta$ according to some model, so as to get a gradient-related direction, as is done in \cite{lapucci2024globallyconvergentgradientmethod,Lee17,Liu2024,Tang24,zhang2023drsomdimensionreducedsecondorder};
	\item using standard safeguarding strategies \cite{Powell1977}, checking whether $s_k$ is a descent (gradient-related) direction; if not, restarting with a gradient descent step.
\end{itemize}
All of these approaches have two elements in common:
\begin{itemize}
	\item the ``pure'' heavy-ball update is interpreted as the construction of a direction and it is modified if it does not define a descent direction; 
	\item once the direction is acceptable, a line search is carried out.
\end{itemize}
We argue that these two characteristics are typical of a procedure which is closer to a conjugate-gradient type method rather than a globalized version of Polyak's algorithm: a line search is carried out along a descent direction obtained as a suitable combination of the negative gradient and the previous direction.

In fact, Polyak's heavy-ball method for strongly convex functions is not designed to necessarily make updates along descent directions. Applying any of the above globalization strategies may result in the rejection of pure heavy-ball steps in scenarios where the vanilla algorithm is proven to converge.

In this perspective, our proposed globalization strategy is different, as it always checks first the objective value at the heavy-ball update point and, then, backtracks along the curve only if the update is not acceptable. In other words, globalization by means of curve searches might allow to accept the sequence produced by the vanilla algorithm (retrieving the fast linear rate of convergence) in strongly convex problems where safeguarded approaches based on line searches would apply perturbations. For this reason, we argue that the proposed approach represents a proper globalization strategy of heavy-ball method.

In Section \ref{subsec::strongly_convex}, we will provide some numerical insights on the behavior of the different globalization strategies in strongly convex cases.

\section{Computational experiments}
\label{sec::comp-exp}

In this section, we present preliminary results evaluating the effectiveness and efficiency of our approach on a set of benchmark problems. The code\footnote{The implementation code of our approach can be found at \href{https://github.com/dfede3/cs_hb}{github.com/dfede3/cs\_hb}.} for the experimentation was written in \texttt{Python3}. All experiments were conducted on a computer with the following characteristics: Ubuntu 22.04 OS, Intel Xeon Processor E5–2430 v2 (6 cores, 2.50 GHz), 32 GB of RAM.

Throughout this section, we refer to our approach of the form \eqref{eq::CS} as \texttt{CS}, and we denote its non-monotone variant as \texttt{CS\_NMT}. For both variants, we considered the descent direction $d_k = -g_f\nabla f(x^k)$ and the heavy-ball one $s_k$ given in \eqref{eq::hb_sk}, where $g_f$ is a parameter that controls the magnitude of the curve initial velocity (see also Remark \ref{rem::bc}). In our experimental evaluation, \texttt{CS} is compared against the main related approaches (also discussed in Section \ref{sec::heavyball}) to solve problem \eqref{OP}: Gradient Descent (\texttt{GD}); Polyak's heavy-ball method \cite{polyak1964some,polyak1987introduction}, denoted as \texttt{M\_HB}; a variant of the heavy-ball method proposed in \cite{Powell1977,fan2023msl}, referred to as \texttt{M\_RES}, which employs a restart mechanism whenever the heavy-ball direction is not a descent direction; the heavy-ball approach with adaptive momentum \cite{fan2023msl}, denoted as \texttt{M\_BETA}, where the parameter $\beta$ is repeatedly halved until a descent direction is obtained. The stopping criterion, shared by all algorithms, is $\|\nabla f(x^k)\|_\infty\le \varepsilon = 10^{-3}$; moreover, if an algorithm hits 5000 iterations, or it exceeds a time limit of 2 minutes, we consider it a failure. Based on preliminary tuning experiments (not reported here for brevity), we set the algorithm parameters as follows: $g_f = 0.125$, $\alpha = 1$, $\beta = 0.9$, $\Delta_0 = 1$, $\sigma = 10^{-7}$ and $\delta = 0.5$.

\begin{table}[htb]
	\centering
	\scriptsize
	\renewcommand{\arraystretch}{1.25}
	\caption{Benchmark of \texttt{CUTEst} problems used for experimentation.}
	\label{tab::problems}
	\begin{tabular}{|c|c|}
		\hline
		$n$ & \textit{Problems} \\
		\hline
		\hline
		$10$& HILBERTB, STRTCHDV, STRTCHDVB, TRIGON1, TRIGON1B \\
		\hline
		$21$& SANTALS \\
		\hline
		$25$& HATFLDC, HATFLDGLS\\
		\hline
		$46$& BQPGABIM\\
		\hline
		$50$& BQPGASIM, TOINTGOR, TOINTPSP\\
		\hline
		$57$& BA-L1LS, BA-L1SPLS\\
		\hline
		$51$& DECONVB, DECONVU\\
		\hline
		$98$& LUKSAN13LS, LUKSAN14LS\\
		\hline
		$100$& LUKSAN15LS, MANCINO, QING, SENSORS\\
		\hline
		$102$& SPIN2LS\\
		\hline
		$200$& ARGLINA, BROWNAL, VARDIM\\
		\hline
		$380$& HADAMALS\\
		\hline
		$494$& MNISTS0LS, MNISTS5LS\\
		\hline
		$500$& OSCIPATH\\
		\hline
		$502$& INTEQNELS\\
		\hline
		$1000$& EG2, EXTROSNB, KSSLS, POWELLBC\\
		\hline
		$1024$& MSQRTALS, MSQRTBLS\\
		\hline
		$1999$& LINVERSE\\
		\hline
		$2000$& CHARDIS0, EDENSCH\\
		\hline
		\multirow{2}{*}{$3000$}& DIXMAANB, DIXMAANC, DIXMAANF, DIXMAANG, DIXMAANH, DIXMAANJ,\\
		&DIXMAANK, DIXMAANL, DIXMAANN, DIXMAANO, DIXMAANP\\
		\hline
		$4000$& WOODS\\
		\hline
		$4999$& SPMSRTLS\\
		\hline
		\multirow{3}{*}{$5000$}& ARWHEAD, BDEXP, BIGGSB1, BROYDN7D, BROYDNBDLS, BRYBND,\\
		&CRAGGLVY, DQDRTIC, FLETBV3M, NCB20B, NONDQUAR, PENTDI,\\
		&POWELLSG, SCHMVETT, SROSENBR, TOINTGSS, TQUARTIC, VAREIGVL\\
		\hline
		$5010$& NCB20\\
		\hline
		$5625$& FMINSURF\\
		\hline
	\end{tabular}
\end{table}

We conducted our experiments primarily on the benchmark problems listed in Table~\ref{tab::problems}, which are drawn from the \texttt{CUTEst} collection \cite{gould2015cutest}. The problem size $n$ ranges from 10 to 5625. For each problem, all algorithms were initialized with the same feasible starting point, as provided by the \texttt{CUTEst} library.

To assess the performance of the algorithms, we considered the final objective function value $f^\star$ achieved by each method and the corresponding execution time $T$. For a compact visualization of the results, we made use of performance profiles \cite{dolan2002benchmarking}.

\subsection{The strongly convex case}
\label{subsec::strongly_convex}

Before presenting the results on the set of (non-convex) \texttt{CUTEst} problems, we first illustrate the behavior of the tested algorithms on a simple, strongly-convex example. We are particularly interested in validating our claim that, differently than other globalization strategies, the one based on curve searches does not necessarily lead to abandoning the provably good behavior of pure (optimal) heavy-ball. The considered problem is formulated as follows:
\begin{equation}
	\label{eq::logistic}
	\min_{x\in\mathbb{R}^n}\log\left(1 + e^{c^\top x}\right) + \frac{1}{2}\|x\|^2,
\end{equation}
where $c = [c_1, c_2]^\top = [34, -1]^\top$. In this case, we note that the strong convexity constant is $\mu = 1$, and the Lipschitz constant of the gradient is given by $L = \frac{1}{4}(c_1^2 + c_2^2) + 1$. The problem admits a unique optimal solution, which is $x^\star = [-0.158, 0.005]^\top$. Since this is a smooth and strongly convex problem, the optimal parameters $\alpha^\star$ and $\beta^\star$ for the standard heavy-ball method can be computed as indicated in \eqref{eq::abstar}. In Figure~\ref{fig:strong_convex}, we compare the iterative convergence behavior of \texttt{CS}, \texttt{CS\_NMT} ($M = 20$), \texttt{GD}, \texttt{M\_HB}, and \texttt{M\_RES}, in terms of their distance to the optimal solution.

\begin{figure}[!h]
	\centering
	\includegraphics[width=\linewidth]{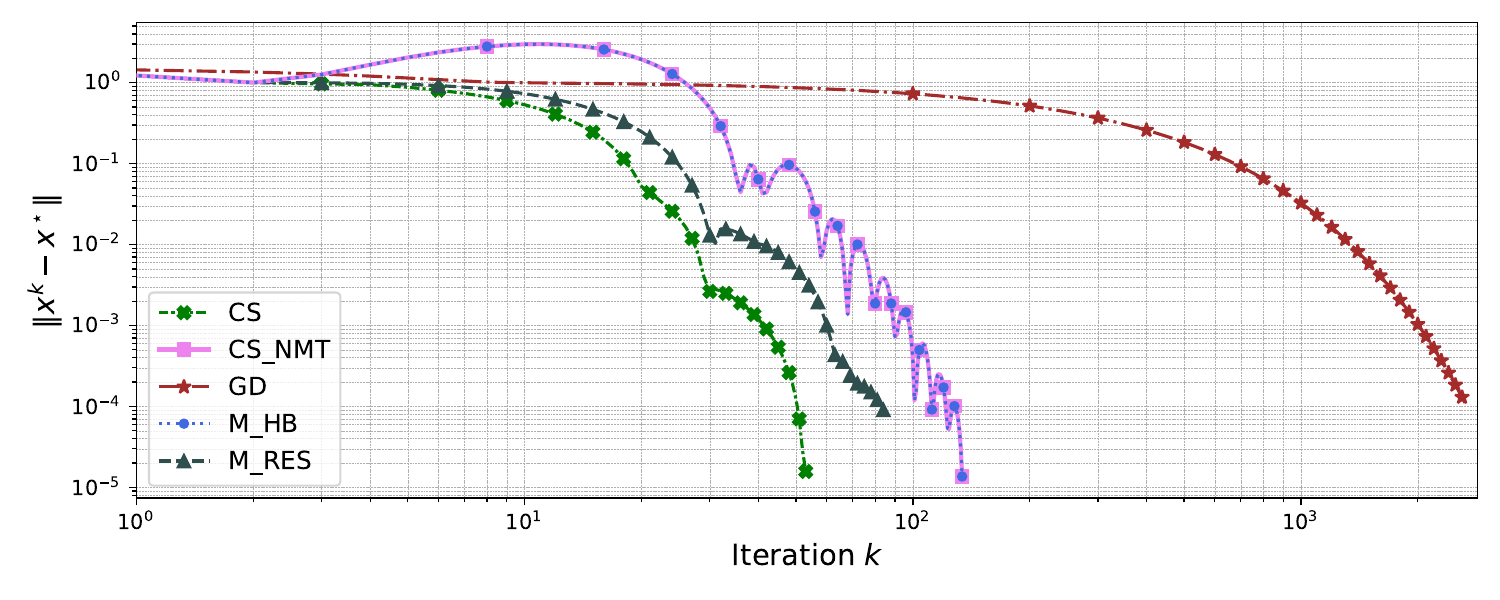}
	\caption{Plot of the distance between the current solution $x^k$ and the optimal solution $x^\star$ of problem \eqref{eq::logistic} across the iterations for \texttt{CS}, \texttt{CS\_NMT}, \texttt{GD}, \texttt{M\_HB}, \texttt{M\_RES}. Both axes are displayed on a logarithmic scale.}
	\label{fig:strong_convex}
\end{figure}

We can observe that, in fact, the nonmonotone curve search based version \texttt{CS\_NMT} ends up perfectly retracing the steps of pure heavy-ball \texttt{M\_HB}, which, as expected from theory, outperforms simple gradient descent \texttt{GD}. Interestingly, the strategy based on restart \texttt{M\_RES}, while abandoning the heavy-ball path, allows for faster convergence to the solution. However, the version with monotone curve search \texttt{CS} performs even better, proving to be the most efficient among the methods tested on this problem.

\subsection{\texttt{CUTEst} problems}

We conclude the experimental section by comparing the approaches on the set of \texttt{CUTEst} problems listed in Table \ref{tab::problems}. For a compact representation of the results, Figure \ref{fig:pp} shows the
performance profiles of \texttt{CS}, \texttt{GD}, \texttt{M\_HB}, \texttt{M\_RES} and \texttt{M\_BETA} in terms of $f^\star$ and $T$. To better highlight the differences among the methods, we present separate plots for each performance metric: Figures~\ref{fig:Tall}-\ref{fig:fall} report results for the full set of problems; Figures~\ref{fig:Tlow}-\ref{fig:flow} focus on problems with $n < 1000$; Figures~\ref{fig:Thigh}-\ref{fig:fhigh} show results for problems with $n \ge 1000$.

\begin{figure}
	\centering
	\subfloat[\label{fig:Tall}$T$, all problems.]{\includegraphics[width=0.32\textwidth]{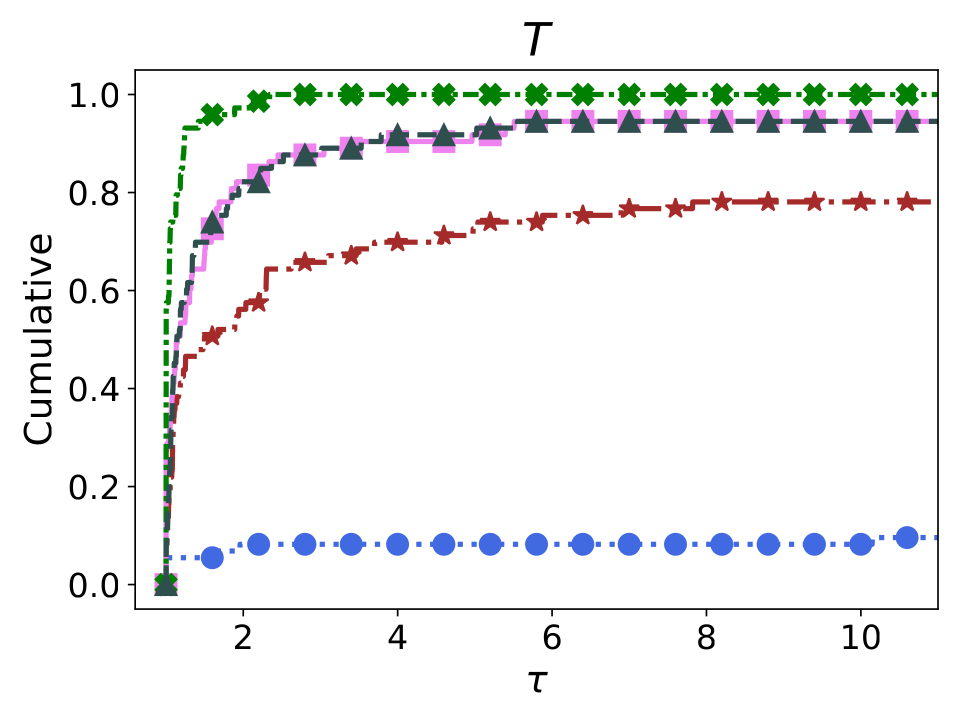}}
	\hfil
	\subfloat[\label{fig:Tlow}$T$, $n < 1000$.]{\includegraphics[width=0.32\textwidth]{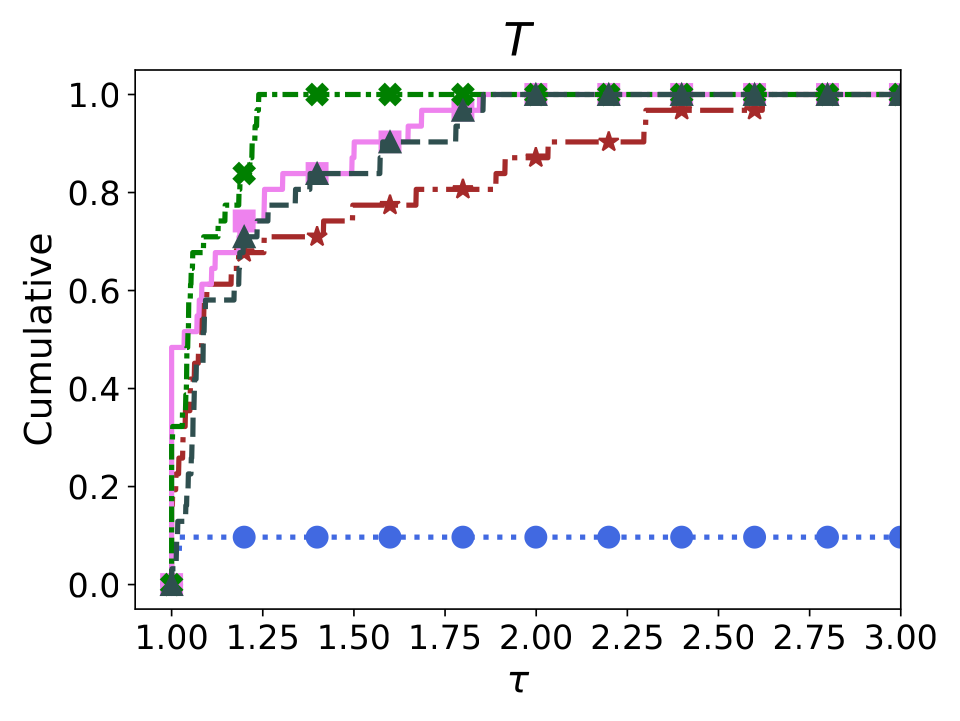}}
	\hfil
	\subfloat[\label{fig:Thigh}$T$, $n \ge 1000$.]{\includegraphics[width=0.32\textwidth]{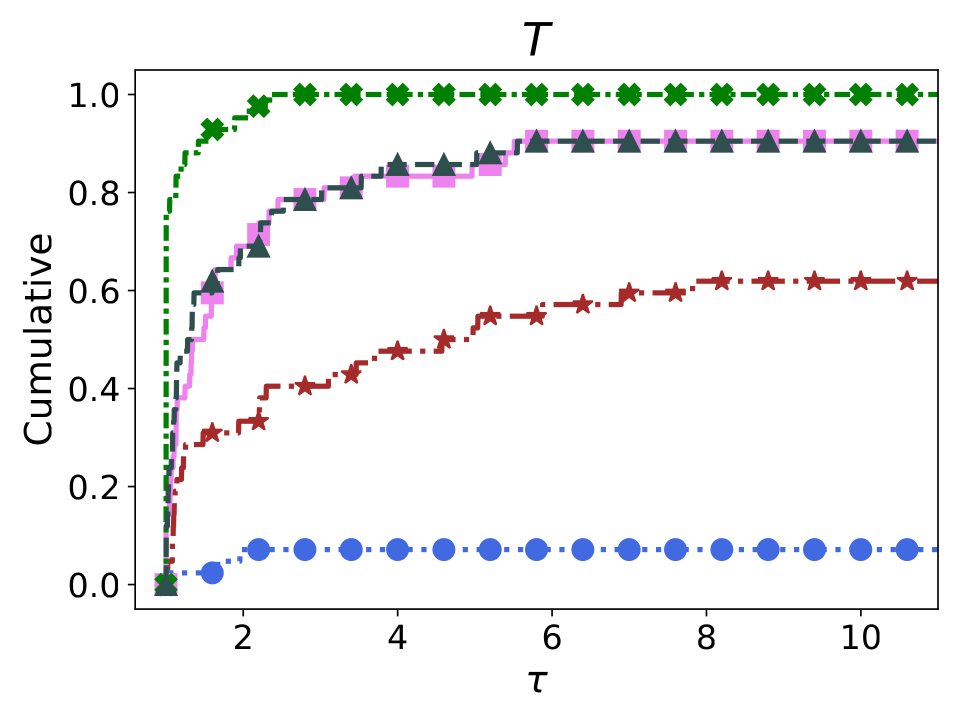}}
	\\
	\subfloat[\label{fig:fall}$f^\star$, all problems.]{\includegraphics[width=0.32\textwidth]{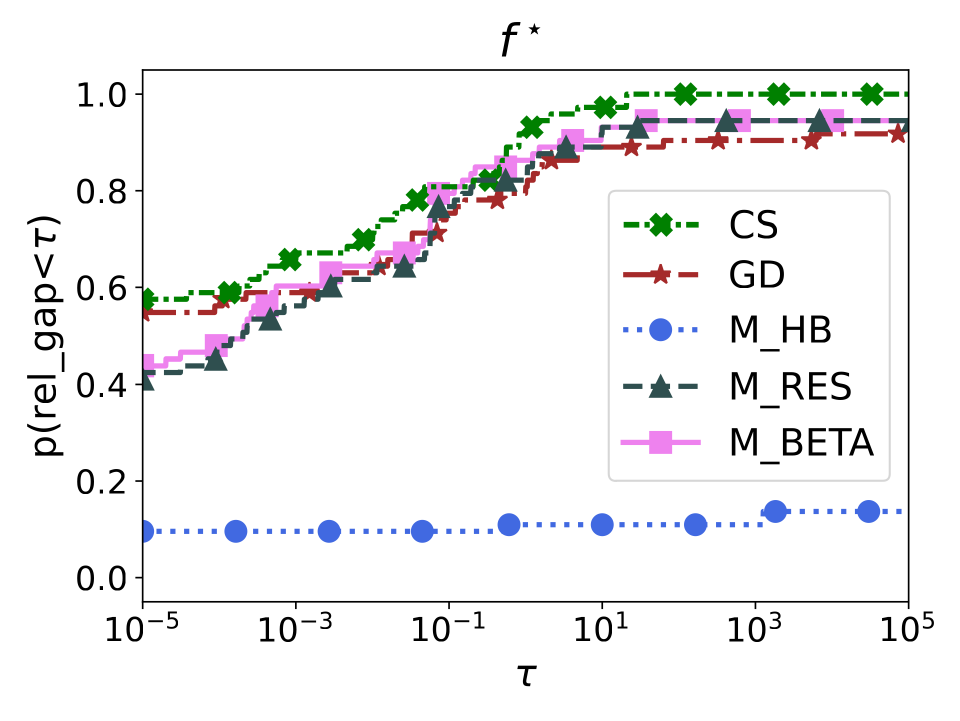}}
	\hfil
	\subfloat[\label{fig:flow}$f^\star$, $n < 1000$.]{\includegraphics[width=0.32\textwidth]{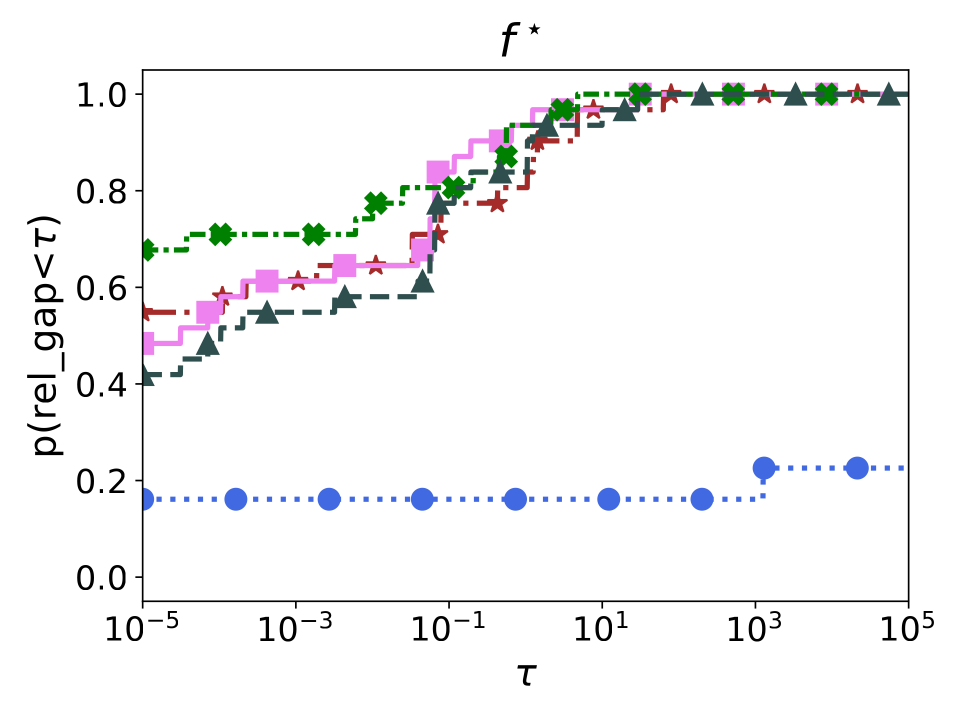}}
	\hfil
	\subfloat[\label{fig:fhigh}$f^\star$, $n \ge 1000$.]{\includegraphics[width=0.32\textwidth]{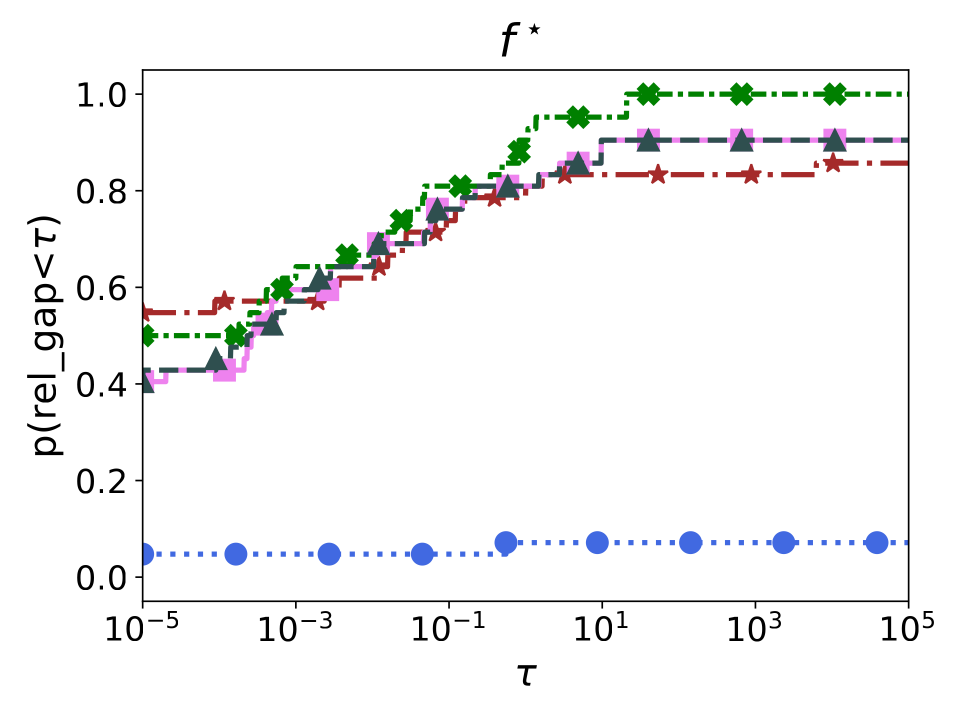}}
	\caption{Performance profiles in terms of $f^\star$ and $T$ obtained by \texttt{CS}, \texttt{GD}, \texttt{M\_HB}, \texttt{M\_RES} and \texttt{M\_BETA} on the \texttt{CUTEst} problems listed in Table \ref{tab::problems}.}
	\label{fig:pp}
\end{figure}

In this experimental setting, the least effective methodology was \texttt{M\_HB}, which clearly struggles due to the challenging selection of $\alpha$ and $\beta$ in non-convex scenarios. However, we can observe that the introduction of a momentum term, if done properly, allows to significantly boost the performance of standard \texttt{GD} both in terms of efficiency and effectiveness.
The two globalization strategies employed in \texttt{M\_RES} and \texttt{M\_BETA} seem to lead to quite similar results, whereas our \texttt{CS} approach apparently leads to further improvements, especially as the size of the problem grows. This behavior aligns with findings in the existing literature on curve search methods (see, e.g., \cite{xu2016global}). Interestingly, Figures \ref{fig:fall}-\ref{fig:fhigh} ensure us that the improvement in efficiency is not due to the algorithm ending up in worse stationary points w.r.t.\ the other algorithms.  

\section{Conclusions}
\label{sec::conclusions}

In this work, we proposed a new curve search based globalization strategy for heavy-ball type methods for unconstrained optimization problems. Our framework, which is in fact applicable to a broad class of gradient-based iterative methods, ensures global convergence under reasonable assumptions by employing (possibly nonmonotone) backtracking searches along carefully designed search curves. To the best of our knowledge, this is the first work to provide convergence results for a nonmonotone curve search strategy. Moreover, another original contribution consists in the derivation of worst-case complexity results in the nonconvex setting, established under mild conditions for quadratic search curves.

From a numerical perspective, we demonstrated that, unlike other globalization strategies from the literature, our approach retains the desirable behavior of the pure heavy-ball method in the strongly convex case. Finally, extensive computational experiments on nonconvex problems confirmed the practical effectiveness of our method, which outperformed existing alternatives in terms of efficiency.

\renewcommand{\theequation}{A\arabic{equation}}
\setcounter{equation}{0}

\renewcommand{\thetable}{C\Roman{table}}
\setcounter{table}{0}
\renewcommand{\theHtable}{Supplement\thetable}

\renewcommand{\theproposition}{A\arabic{proposition}}
\setcounter{proposition}{0}
\renewcommand{\theassumption}{A\arabic{assumption}}
\setcounter{assumption}{0}

\renewcommand{\thealgocf}{A\arabic{algocf}}
\setcounter{algocf}{0}

{\appendices
	\section{Proof of Proposition \ref{prop::nm_globalconvm_strong}}
	\label{app::proof}
	
	\begin{proof}
		We follow the proof of Proposition \ref{prop::nm_globalconvm} until we get equation \eqref{eq::for_app}:
		\begin{equation}
			\label{eq::from_app}
			\begin{aligned}
				f(x^{l(k)}) &\le f(x^{l(l(k)-1)})-\sigma c_1t_{l(k)-1}\|\nabla f(x^{l(k)-1})\|^2 \\&\le f(x^{l(l(k)-1)})-\sigma \frac{c_1}{c_2^2}t_{l(k)-1}\|d_{l(k)-1}\|^2,
			\end{aligned}
		\end{equation}
		where the last inequality comes from the second gradient-related condition.
		
		The inequality holds for every $k$; hence, we can take the limit for $k\to\infty$ recalling $\lim_{k\to\infty}f(x^{l(k)})=f^\star$ (see proof of Proposition \ref{prop::nm_globalconvm}), $\sigma \in (0, 1)$, $c_1 > 0$, $c_2 > 0$ and the non-negativeness of the norm; we get 
		\begin{equation}
			\label{eq::lim_bef_ind}
			\lim_{k\to\infty}t_{l(k)-1}\|d_{l(k)-1}\|=0.
		\end{equation}
		
		Now, let us prove that $\lim_{k\to\infty}t_k\|\nabla f(x^k)\|=0$. Let $\hat{l}(k)=l(k+M+2)$. First, we show by induction that, for any $j\geq1$,
		\begin{equation}
			\label{eq::th_ind}
			\lim_{k\to\infty}t_{\hat{l}(k)-j}\|d_{\hat{l}(k)-j}\|=0, \qquad \lim_{k\to\infty} f(x^{\hat{l}(k)-j}) = \lim_{k\to\infty} f(x^{l(k)}),
		\end{equation}
		assuming in the following that $k \ge j - 1$ without loss of generality. If $j=1$, since $\{\hat{l}(k)\}\subset \{l(k)\}$, the first equation in  \eqref{eq::th_ind} easily follows from \eqref{eq::lim_bef_ind}. These two equations, by assumption on $\gamma_k$ for all $k$, also imply that
		\begin{gather*}
			\lim_{k\to\infty}\|x^{\hat{l}(k)}-x^{\hat{l}(k)-1}\| = \lim_{k\to\infty}\|\gamma_{\hat{l}(k)-1}(t_{\hat{l}(k)-1})-\gamma_{\hat{l}(k)-1}(0)\|\le\lim_{k\to\infty}y(t_{\hat{l}(k)-1}\|d_{\hat{l}(k)-1}\|) = 0,
		\end{gather*}
		resulting by the non-negativeness of the norm in $\|x^{\hat{l}(k)}-x^{\hat{l}(k)-1}\| \to 0$. As a consequence, the second equation of \eqref{eq::th_ind} holds for $j=1$ by the continuity of $f$ and the fact that $\{\hat{l}(k)\}\subset \{l(k)\}$.
		
		Then, let us assume that \eqref{eq::th_ind} holds for a given $j$. Similar as for \eqref{eq::from_app}, by the Armijo \eqref{eq::nm_ARcond} and the gradient-related conditions we can get
		\begin{gather*}
			f(x^{\hat{l}(k)-j}) \le f(x^{l(\hat{l}(k)-j-1)}) - \sigma \frac{c_1}{c_2^2} t_{\hat{l}(k)-j-1}\|d_{\hat{l}(k)-j-1}\|^2.
		\end{gather*}
		Taking again the limit for $k\to\infty$ and recalling the second equation in \eqref{eq::th_ind}, we have $\lim_{k\to\infty} t_{\hat l(k)-(j+1)}\|d_{\hat l(k)-(j+1)}\|=0$, which, as above, implies $\|x^{\hat l(k)-j}-x^{\hat l(k)-(j+1)}\|\to0$; consequently, by the second equation in \eqref{eq::th_ind} and the continuity of $f$, $\lim_{k\to\infty} f(x^{\hat l(k)-(j+1)})=\lim_{k\to\infty} f(x^{\hat l(k)-j})=\lim_{k\to\infty} f(x^{l(k)})$. Hence, we have proved that \eqref{eq::th_ind} holds for any given $j\geq1$.
		
		Now, for any $k$, $x^{\hat{l}(k)} - x^{k+1} = \sum_{j=1}^{\hat{l}(k) - k - 1}(x^{\hat{l}(k) - j + 1} - x^{\hat{l}(k) - j})$, and thus, by assumption on $\gamma_k$, for all $k$, and the triangle inequality,
		\begin{gather*}
			\begin{aligned}
				\|x^{\hat{l}(k)} &- x^{k+1}\| \le \sum_{j=1}^{\hat{l}(k) - k - 1}\|x^{\hat{l}(k) - j + 1} - x^{\hat{l}(k) - j}\| \\&= \sum_{j=1}^{\hat{l}(k) - k - 1}\|\gamma_{\hat{l}(k) - j}(t_{\hat{l}(k) - j}) - \gamma_{\hat{l}(k) - j}(0)\| \le \sum_{j=1}^{\hat{l}(k) - k - 1}y(t_{\hat{l}(k) - j}\|d_{\hat{l}(k) - j}\|).
			\end{aligned}
		\end{gather*}
		By definition of $\hat{l}(k)$ and $l(k)$ (see proof of Proposition \ref{prop::nm_globalconvm}), we have $\hat{l}(k) - k - 1 = l(k+M+2) - k - 1 \leq M + 1$; thus, recalling the first equation in \eqref{eq::th_ind}, that $y$ is a forcing function and the norm operator is non-negative, we can take the limit for $k \to \infty$ on the last result to get $\lim_{k \to \infty} \|x^{k+1} - x^{\hat l(k)}\| = 0$. Moreover, since $f$ is continuous and $\{\hat{l}(k)\}\subset \{l(k)\}$, we have $\lim_{k \to \infty} f(x^{k+1}) = \lim_{k \to \infty} f(x^{l(k)})$.
		By \eqref{eq::nm_ARcond} and the gradient-related conditions we have $f(x^{k+1}) \leq f(x^{l(k)}) - \sigma c_1 t_k\|\nabla f(x^k)\|^2$.
		Thus, taking the limit for $k \to \infty$, since $\{f(x^{l(k)})\}$ admits a limit, $\sigma \in (0, 1)$ and $c_1 > 0$, we obtain 
		\begin{equation}
			\label{eq::final_lim}
			\lim_{k \to \infty} t_k\|\nabla f(x^k)\| = 0.
		\end{equation}
		
		Now, let $\bar{x}$ be one of the accumulation points of $x^k$, i.e., there exists $K\subseteq\{0,1,\dots,\}$ such that $\lim_{k\in K,k\to\infty} x^k=\bar{x}$, and let us suppose by contradiction that $\|\nabla f(\bar{x})\| = \nu > 0$. Hence, by \eqref{eq::final_lim} it must be $\lim_{k \in K, k \to \infty} t_k = 0$. We thus follow a similar reasoning as the last part of the proof of Proposition \ref{prop:globalconvm} to get a contradiction, and this completes the proof of the thesis.
	\end{proof}
}

\section*{Acknowledgements}
The authors are very grateful to Dr. T. Trinci for the fruitful discussions.

\section*{Conflict of Interest}
The authors declare that they have no conflict of interest.

\section*{Funding Sources}
This research did not receive any specific grant from funding agencies in the public, commercial, or not-for-profit sectors.

\section*{Data Availability Statement}
Data sharing is not applicable to this article, as no new data were created or analyzed in this study.

\section*{Code Availability Statement}
The implementation code of the approach presented in the paper can be found at \href{https://github.com/dfede3/cs_hb}{github.com/dfede3/cs\_hb}.

\bibliographystyle{abbrv}

\begin{thebibliography}{10}
	
	\bibitem{ben1990curved}
	A.~Ben-Tal, A.~Melman, and J.~Zowe.
	\newblock Curved search methods for unconstrained optimization.
	\newblock {\em Optimization}, 21(5):669--695, 1990.
	
	\bibitem{bertsekas1999nonlinear}
	D.~P. Bertsekas.
	\newblock {\em Nonlinear programming}.
	\newblock Athena Scientific, Belmont, MA, 2 edition, 1999.
	
	\bibitem{botsaris1978differential}
	C.~A. Botsaris.
	\newblock Differential gradient methods.
	\newblock {\em Journal of Mathematical Analysis and Applications},
	63(1):177--198, 1978.
	
	\bibitem{cartis2022evaluation}
	C.~Cartis, N.~I. Gould, and P.~L. Toint.
	\newblock {\em Evaluation Complexity of Algorithms for Nonconvex Optimization:
		Theory, Computation and Perspectives}.
	\newblock SIAM, 2022.
	
	\bibitem{dolan2002benchmarking}
	E.~D. Dolan and J.~J. Mor{\'e}.
	\newblock Benchmarking optimization software with performance profiles.
	\newblock {\em Mathematical Programming}, 91:201--213, 2002.
	
	\bibitem{fan2023msl}
	C.~Fan, S.~Vaswani, C.~Thrampoulidis, and M.~Schmidt.
	\newblock {MSL}: An adaptive momentum-based stochastic line-search framework.
	\newblock In {\em OPT 2023: Optimization for Machine Learning}, 2023.
	
	\bibitem{farin2000essentials}
	G.~Farin and D.~Hansford.
	\newblock {\em The essentials of CAGD}.
	\newblock AK Peters/CRC Press, 2000.
	
	\bibitem{ghadimi2015global}
	E.~Ghadimi, H.~R. Feyzmahdavian, and M.~Johansson.
	\newblock Global convergence of the {H}eavy-{B}all method for convex
	optimization.
	\newblock In {\em 2015 European control conference (ECC)}, pages 310--315.
	IEEE, 2015.
	
	\bibitem{goldfarb1980curvilinear}
	D.~Goldfarb.
	\newblock Curvilinear path steplength algorithms for minimization which use
	directions of negative curvature.
	\newblock {\em Mathematical programming}, 18(1):31--40, 1980.
	
	\bibitem{gould2000exploiting}
	N.~I. Gould, S.~Lucidi, M.~Roma, and P.~L. Toint.
	\newblock Exploiting negative curvature directions in linesearch methods for
	unconstrained optimization.
	\newblock {\em Optimization methods and software}, 14(1-2):75--98, 2000.
	
	\bibitem{gould2015cutest}
	N.~I. Gould, D.~Orban, and P.~L. Toint.
	\newblock {CUTE}st: a constrained and unconstrained testing environment with
	safe threads for mathematical optimization.
	\newblock {\em Computational Optimization and Applications}, 60:545--557, 2015.
	
	\bibitem{grippo1986nonmonotone}
	L.~Grippo, F.~Lampariello, and S.~Lucidi.
	\newblock A nonmonotone line search technique for newton’s method.
	\newblock {\em SIAM journal on Numerical Analysis}, 23(4):707--716, 1986.
	
	\bibitem{grippo2023introduction}
	L.~Grippo and M.~Sciandrone.
	\newblock {\em Introduction to methods for nonlinear optimization}, volume 152.
	\newblock Springer Nature, 2023.
	
	\bibitem{kassing2024polyak}
	S.~Kassing and S.~Weissmann.
	\newblock Polyak's {H}eavy {B}all method achieves accelerated local rate of
	convergence under {P}olyak-{L}ojasiewicz inequality.
	\newblock {\em arXiv preprint arXiv:2410.16849}, 2024.
	
	\bibitem{lapucci2024globallyconvergentgradientmethod}
	M.~Lapucci, G.~Liuzzi, S.~Lucidi, and M.~Sciandrone.
	\newblock A globally convergent gradient method with momentum, 2024.
	
	\bibitem{Lee17}
	C.-P. Lee, P.-W. Wang, W.~Chen, and C.-J. Lin.
	\newblock {\em Limited-memory Common-directions Method for Distributed
		Optimization and its Application on Empirical Risk Minimization}, pages
	732--740.
	\newblock SIAM, 2017.
	
	\bibitem{lessard2016analysis}
	L.~Lessard, B.~Recht, and A.~Packard.
	\newblock Analysis and design of optimization algorithms via integral quadratic
	constraints.
	\newblock {\em SIAM Journal on Optimization}, 26(1):57--95, 2016.
	
	\bibitem{Liu2024}
	Z.~Liu, Y.~Ni, H.~Liu, and W.~Sun.
	\newblock A new subspace minimization conjugate gradient method for
	unconstrained minimization.
	\newblock {\em Journal of Optimization Theory and Applications},
	200(2):820--851, Feb 2024.
	
	\bibitem{polyak1964some}
	B.~T. Polyak.
	\newblock Some methods of speeding up the convergence of iteration methods.
	\newblock {\em Ussr computational mathematics and mathematical physics},
	4(5):1--17, 1964.
	
	\bibitem{polyak1987introduction}
	B.~T. Polyak.
	\newblock {\em Introduction to optimization}.
	\newblock New York, Optimization Software,, 1987.
	
	\bibitem{Powell1977}
	M.~J.~D. Powell.
	\newblock Restart procedures for the conjugate gradient method.
	\newblock {\em Mathematical Programming}, 12(1):241--254, Dec 1977.
	
	\bibitem{saab2022adaptive}
	S.~Saab~Jr, S.~Phoha, M.~Zhu, and A.~Ray.
	\newblock An adaptive {P}olyak {H}eavy-{B}all method.
	\newblock {\em Machine Learning}, 111(9):3245--3277, 2022.
	
	\bibitem{shi2005new}
	Z.-J. Shi and J.~Shen.
	\newblock A new descent algorithm with curve search rule.
	\newblock {\em Applied mathematics and computation}, 161(3):753--768, 2005.
	
	\bibitem{Tang24}
	T.~Tang, K.-C. Toh, N.~Xiao, and Y.~Ye.
	\newblock A {R}iemannian dimension-reduced second-order method with application
	in sensor network localization.
	\newblock {\em SIAM Journal on Scientific Computing}, 46(3):A2025--A2046, 2024.
	
	\bibitem{xu2016global}
	Z.~Xu, Y.~Tang, Z.-J. Shi, et~al.
	\newblock Global convergence of curve search methods for unconstrained
	optimization.
	\newblock {\em Applied Mathematics}, 7(07):721, 2016.
	
	\bibitem{zhang2023drsomdimensionreducedsecondorder}
	C.~Zhang, D.~Ge, C.~He, B.~Jiang, Y.~Jiang, and Y.~Ye.
	\newblock {DRSOM}: A dimension reduced second-order method, 2023.
	
\end{thebibliography}

\end{document}